\newtheorem{counter1}{not to be used as environment} [section]
	\newtheorem{The}[counter1]{Theorem}
	\newtheorem{Lem}[counter1]{Lemma}
	\newtheorem{Cor}[counter1]{Corollary}
	\theoremstyle{definition}
	\newtheorem{Rem}[counter1]{Remark}
\begin{document}
\title{Sequential block bootstrap in a Hilbert space with application to change point analysis}
\author{Olimjon Sharipov\footnote{Institute of Mathematics, National University of Uzbekistan, 29 Dormon Yoli Str., Tashkent, 100125,Uzbekistan}, Johannes Tewes\footnote{Fakult\"{a}t f\"{u}r Mathematik, Ruhr-Universit\"{a}t Bochum, 44780 Bochum, Germany, Email address: Johannes.Tewes@rub.de} and Martin Wendler\footnote{Institut für Mathematik und Informatik, Ernst Moritz Arndt Universit\"{a}t Greifswald, Germany}}

\maketitle

\begin{abstract}
A new test for structural changes in functional data is investigated. It is based on Hilbert space theory and critical values are deduced from bootstrap iterations. Thus a new functional central limit theorem for the block bootstrap in a Hilbert space is required. The test can also be used to detect changes in the marginal distribution of random vectors, which is supplemented by a simulation study. Our methods are applied to hydrological data from Germany. \\\\
\noindent {\sf\textbf{Keywords:}} near epoch dependence, Hilbert space, block bootstrap, functional data, change-point test
\end{abstract}

\section{Introduction and main results}
\subsection{Introduction}

In the last decade statistical methods for functional data have received great attention, among them environmental data analysis, see \cite{HoeKo}. Due to a strong seasonal effect, for example in temperature or hydrological data, such time series are non-stationary and thus change point analysis is a complex topic. A possible solution is to look at annual curves instead of the whole time series. In this case, observations become functions. The method of functional principal components was used by \cite{KoMaSoZh} in testing for independence in the functional linear model and by \cite{BeHaeKn} in two sample tests for $L^2[0,1]$-valued random variables, a method that  was extended to change point analysis by \cite{BeGaHoKo}. Another approach is due to \cite{FrJuLiLl} who used record functions to detect trends in functional data.  In contrast to all former approaches, our method takes the fully functional observation into account. Whereas the statistic of \cite{BeHaeKn} is $\mathds R^d$-valued, our statistic depends directly on the functional or more generally Hilbert space-valued random variables. This becomes clear when considering the analogue of the CUSUM statistic, which takes the maximum of the norm of
\begin{align}
\sum_{i=1}^k X_i - \frac k n \sum_{i=1}^n X_i \ \ \text{for } k=1, \dots , n-1, \label{IntroCusum}
\end{align}
where $X_1, \dots , X_n$ are random variables taking values in a Hilbert space $H$. \\
Another change-point problem considers changes in the marginal distribution of random variables, now taking values in $\mathds R^d$. The advantage is that the type of the alternative (change in mean, change in scale,...) has not to be prespecified. The Kolmogorov Smirnov-type change point test was used for example by \cite{GoHo} and \cite{Ino} and is
\begin{align}
\max_{1 \leq m \leq n-1} \sup_{t \in [0,1]} \lvert \hat F_m(t) - \hat F_{m+1;n}(t) \rvert, \label{TestStatistikVerteilung}
\end{align}
where $\hat F_m$ and $\hat F_{m+1;n}$ are empirical distribution functions, based on $X_1,\dots ,X_m$ and $X_{m+1}, \dots X_n$, respectively. Define $Y_i$ by $Y_i(t) := 1_{ \{X_i \leq t \} }$ then (\ref{TestStatistikVerteilung}) equals
\begin{align*}
 \max_{1 \leq m \leq n-1} \lVert \bar{Y}_m -\bar{Y}_{m+1;n} \rVert_{\infty} ,
\end{align*}
where $\bar Y_m$ and $\bar Y_{m+1;n}$ are the sample means of $Y_1,\dots ,Y_m$ and $Y_{m+1}, \dots Y_n$, respectively. The $Y_i$ are no longer real valued random variables, but take values in a function space. Often one uses the space $D[0,1]$ of cadlag functions, however functional central limit theorems in $D[0,1]$ are difficult to obtain. Therefore in this paper we want to consider the Hilbert space $L^2$, equipped with the norm $\lVert \cdot \rVert = \sqrt{ \langle \cdot, \cdot \rangle}$, where $\langle \cdot, \cdot \rangle$ is the inner product of the Hilbert space. Using this norm instead of the supremums norm we get the statistic
\begin{align*}
 \max_{1 \leq m \leq n-1} \lVert \bar{Y}_m -\bar{Y}_{m+1;n} \rVert,
\end{align*}
which is a Cram\'{e}r-von Mises-type statistic. This $L^2$ approach to change-point analysis was also recently considered for independent observations by \cite{TsNi}. \\
Critical values for change-point tests are often deduced from asymptotics. The CUSUM statistic (\ref{IntroCusum}) can be expressed as a functional of the partial sum process
\begin{align*}
\sum_{i=1}^{\lfloor nt \rfloor} X_i \ \ \text{for } t \in [0,1], 
\end{align*} 
whose asymptotic behavior for $H$-valued data was investigated by \cite{ChWh} for mixingales and near epoch dependent processes. For statistical inference, one needs control over the asymptotic distribution. Due to dependence and the infinite dimension of the $\{X_i\}_{i \geq 1}$, the asymptotic distribution depends on an unknown infinite dimensional parameter - the covariance operator. Our solution is the bootstrap, which has been successfully applied to many statistics in the case of real or $\mathds R^d$-valued data. For Hilbert spaces, only \cite{PoRo} and recently \cite{DeShWe} established the asymptotic validity of the bootstrap. The results of \cite{PoRo} can only handle bounded random variables. Thus, indicator functions and statistics of type (\ref{TestStatistikVerteilung}) can be bootstrapped by their method, but general functional data cannot. \\
We extend the non overlapping block bootstrap by a sequential component, i.e. we are bootstrapping the partial sum process instead of the sample mean. This is inevitable for change-point problems, if the location of the possible change-point is unknown. \\
The paper is organized as follows: Sections \ref{Sec1} and \ref{Sec2}  contain the main results, an invariance principle for $H$-valued processes and the functional central limit theorem for bootstrapped data. Section \ref{Applications} describes the statistics and the bootstrap methodology for different change point tests including converging alternatives, while section \ref{SecRealLife} contains an analysis of two real life examples. In a simulation study, the finite sample behavior of the CUSUM test (for functional data) and the Cram\'{e}r-von Mises test (for $\mathds R$-valued data) is investigated and compared to the performance of existing tests. Proofs are provided in the appendix.

\subsection{Functional Central Limit Theorem for Hilbert space- valued functionals of mixing processes} \label{Sec1}

Let $H$ be a separable (i.e. there exists a dense and countable subset) Hilbert space with inner product $\langle \cdot , \cdot \rangle$ and norm $\lVert \cdot \rVert = \sqrt{\langle \cdot , \cdot \rangle}$. We say that an $H$-valued random variable $X$ has mean $\mu \in H$ if $E \langle X,h \rangle = \langle \mu , h\rangle$ for all $h \in H$. We denote it by $EX$. Moreover define the covariance operator $S \colon H \to H$ of $X$ (if it exists) by
\begin{align*}
\langle Sh_1 , h_2 \rangle = E \left [ \langle X- EX,h_1 \rangle \langle X - EX, h_2 \rangle \right ] \ \ h_1,h_2 \in H.
\end{align*}
For more details and a generalization to Banach spaces see the book of \cite{LeTa}. \\
Let $(\xi_i)_{i \in \mathds Z}$ be a stationary sequence of random variables, taking values in an arbitrary separable measurable space. A stationary sequence $(X_n)_{n \in \mathds Z}$ of $H$-valued random variables is called $L_p$-near epoch dependent ( NED($p$) ) on $(\xi_i)_{i \in \mathds Z}$, if there is a sequence $(a_k)_{k \in \mathds N}$ with $a_k \to 0$ as $k \to \infty$ and
\begin{align*}
E \left [ \lVert X_0 - E[ X_0 \vert \mathcal{F}_{-k}^{k} ] \rVert^p \right ] \leq a_k.
\end{align*}
Here $\mathcal{F}_{-l}^{m}= \sigma(\xi_{-l}, \dots , \xi_m)$ denotes the $\sigma$-field generated by $\xi_{-l}, \dots , \xi_m$. For the definition of conditional expectation in Hilbert spaces see \cite{LeTa}.\\
Concerning $(\xi_i)_{i \in \mathds Z}$, we will assume the following notion of mixing. Define the coefficients
\begin{align*}
\beta(k) = \left \lvert E \sup_{A \in \mathcal{F}_k^{\infty}} [ P(A \vert \mathcal{F}_{- \infty}^{0}) - P(A)] \right \rvert.
\end{align*}
$(\xi_i)_{i \in \mathds Z}$ is called absolutely regular if $\beta (k) \rightarrow 0$ as $k \to \infty$. 
\\\\
It is our aim to prove functional central limit theorems for $H$-valued random variables. Therefore, we will use the space $D_H[0,1]$, the set of all cadlag functions mapping from $[0,1]$ to $H$. An $H$-valued function on $[0,1]$ is said to be cadlag, if it is right-continuous and the left limit exists for all $x \in [0,1]$. Analogously to the real valued case we define the Skorohod metric
\begin{align*}
d(f,g) = \inf_{\lambda \in \Lambda} \left \{ \sup_{t \in [0,1]} \lVert f(t) - g \circ \lambda(t) \rVert + \lVert id - \lambda \rVert_{\infty} \right \} \ \ f,g \in D_H[0,1],
\end{align*}
where $\Lambda$ is the class of strictly increasing, continuous mappings of $[0,1]$ onto itself, $\lVert \cdot \rVert$ is the Hilbert space norm and $\lVert \cdot \rVert_{\infty}$ is the supremums norm. Moreover $id \colon [0,1] \to [0,1]$ is the identity function and $\circ$ denotes composition of functions.\\
Most topological properties on $D[0,1]=D_{\mathds R} [0,1]$ carry over to the space $D_H[0,1]$ (for more details on $D_{\mathds R} [0,1]$ see the book of \cite{Bil}). Equipped with the Skorohod metric $D_H[0,1]$ becomes a separable Banach space.
The limit process in our results will be Brownian motion. First define the Hilbert space analogue of a normal distribution. An $H$-valued random variable $N$ is said to be Gaussian, if for all $h \in H \setminus \{0\}$ the $\mathds R$-valued variable $\langle N, h \rangle$ has a normal distribution. The distribution of $N$ is uniquely determined by its mean and its covariance operator. A random element $W$ of $D_H[0,1]$ will be called Brownian motion in $H$ if
\begin{enumerate}
\item [(i)] $W(0) = 0$ almost surely,
\item [(ii)] $W \in C_H[0,1]$ almost surely, where $C_H[0,1]$ is the set of all continuous functions from $[0,1]$ to $H$,
\item [(iii)] the increments on disjoint intervals are independent, 
\item [(iv)] for all $0 \leq t < t+s \leq 1$ the increment $W(t+s) - W(t)$ is Gaussian with mean zero and covariance operator $sS$. $S \colon H \to H$ does not depend on $s$ or $t$.
\end{enumerate}
Note that the distribution of a Brownian motion $W$ is uniquely determined by the covariance operator $S$ of $W(1)$. 
\\\\
The first result states convergence of the partial sum process. Such a result was given by 
\cite{Wal} for martingale difference sequences and by \cite{ChWh} in the near epoch dependent case. They assume strong mixing, which is more general than absolute regularity. Then again, we require $L_1$-near epoch dependence, while they use $L_2$-near epoch dependence, which implies our conditions and is therefore more restrictive.

\begin{The} \label{FCLTHilbertraum}
Let $(X_n)_{n \in \mathds{Z}}$ be $L_1$-near epoch dependent on a stationary, absolutely regular sequence $(\xi_n)_{n \in \mathds{Z}}$ with $EX_1 = \mu \in H$ and assume that the following conditions hold for some $\delta > 0$ 
\begin{enumerate}
\item $E \lVert X_1 \rVert^{4 + \delta} < \infty$,
\item $ \sum_{m=1}^{\infty} m^2 (a_m)^{\delta / (\delta+3)} < \infty$,
\item $ \sum_{m=1}^{\infty} m^2 (\beta(m))^{\delta / (\delta+4)} < \infty$.
\end{enumerate}
Then
\begin{align*}
\left( \frac{1}{\sqrt{n}} \sum_{i=1}^{\lfloor nt \rfloor} (X_i - \mu ) \right)_{t \in [0,1]} \Rightarrow \left(  W(t) \right)_{t \in [0,1]}, 
\end{align*}
where $(W(t))_{t \in [0,1]}$ is a Brownian motion in $H$ and $W(1)$ has the covariance operator $S \colon H \to H$, defined by
\begin{align}
\langle S x, y \rangle = \sum_{i=-\infty}^{\infty} E [\langle X_0 - \mu,x \rangle \langle X_i - \mu, y \rangle ], \ \ \text{for } x,y \in H. \label{KovarianzOperator}
\end{align}
Furthermore, the series in (\ref{KovarianzOperator}) converges absolutely.
\end{The}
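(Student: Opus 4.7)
The plan is the classical two-step approach: establish convergence of the finite-dimensional distributions and then prove tightness of the partial sum process in $D_H[0,1]$. Before either, I would first verify that the series in (\ref{KovarianzOperator}) converges absolutely and that $S$ is trace class. For fixed $x,y \in H$, approximate $X_0$ and $X_i$ by $E[X_0\mid \mathcal{F}_{-m}^{m}]$ and $E[X_i\mid \mathcal{F}_{i-m}^{i+m}]$ with $m = \lfloor i/3 \rfloor$, apply a standard covariance inequality for absolutely regular sequences of the form $\lvert \mathrm{Cov}(U,V)\rvert \le C\beta(r)^{\delta/(4+\delta)}\lVert U\rVert_{4+\delta}\lVert V\rVert_{4+\delta}$ to the approximations, and control the NED approximation error by H\"{o}lder's inequality using the $(4+\delta)$-th moment. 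Conditions 2 and 3 then make the resulting bound on $\lvert E[\langle X_0-\mu,x\rangle\langle X_i-\mu,y\rangle]\rvert$ summable in $i$. Applying the same estimate with $x=y=e_k$ for an orthonormal basis $(e_k)$ of $H$ and summing over $k$ gives trace class of $S$.

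For finite-dimensional convergence, two applications of the Cram\'{e}r--Wold device reduce the problem to showing, for each fixed $h \in H$, the scalar FCLT for the partial sum process $n^{-1/2}\sum_{i\le nt}\langle X_i-\mu,h\rangle$ in $D[0,1]$. This scalar sequence is $L_1$-NED on $(\xi_i)$ with coefficients $\lVert h\rVert a_k$ and has bounded $(4+\delta)$-th moment, so an existing scalar FCLT for NED functionals of absolutely regular sequences applies; the limit variance at time $t$ equals $t\langle Sh,h\rangle$ by the preceding step, matching the projection of the $H$-valued Brownian motion $W$ onto $h$.

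Tightness in $D_H[0,1]$ I would handle via the Hilbert-space analogue of Billingsley's criterion: marginal tightness in $H$ uniformly in $n,t$, plus a modulus-of-continuity estimate. For the marginals, the tail bound $\sum_{k>K} E[\langle S_n(t), e_k\rangle^2]$ follows by applying the preliminary covariance estimate to $x=y=e_k$ and is uniformly small in $n,t$ by trace class of $S$; the compactness criterion in a separable Hilbert space then yields tightness of the marginals. For the modulus, I would establish the fourth-moment increment bound
\begin{equation*}
E\Bigl\lVert \tfrac{1}{\sqrt n}\sum_{i=\lfloor ns\rfloor+1}^{\lfloor nt\rfloor}(X_i-\mu)\Bigr\rVert^{4} \le C(t-s)^2
\end{equation*}
for $t-s \ge 1/n$, from which tightness follows by the standard chaining argument.

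This fourth-moment inequality is the main obstacle. Expanding $\lVert \cdot \rVert^4 = \langle \cdot,\cdot\rangle^2$ produces a quadruple sum $\sum_{i_1,\ldots,i_4} E[\langle X_{i_1}-\mu, X_{i_2}-\mu\rangle \langle X_{i_3}-\mu, X_{i_4}-\mu\rangle]$. Each term is estimated by splitting the four indices at the largest gap and reducing to a covariance inequality treated as in the preliminary step, with NED coupling inside the Hilbert-space inner product replacing the scalar product of classical arguments. The combinatorial bookkeeping is delicate, but parallels the real-valued NED/mixing proofs of partial-sum moments and must be executed carefully enough to extract the $(t-s)^2$ scaling under the relatively weak $L_1$-NED hypothesis balanced by the $(4+\delta)$-th moment.
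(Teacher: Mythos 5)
Your overall architecture (scalar FCLT for $\langle X_i,h\rangle$ obtained from the Lipschitz-preservation of $L_1$-NED, a fourth-moment bound $E\lVert X_1+\dots+X_n\rVert^4\leq Cn^2$, and a maximal inequality plus chaining for the modulus of continuity) parallels the paper's proof, which uses exactly these ingredients (its Lemmas A2 and A3, cited from Borovkova--Burton--Dehling and M\'{o}ricz, which you propose to re-derive). But there is a genuine gap at the point where you tame the infinite-dimensionality. Your claim that trace class of $S$, and the uniform tail bound $\sup_{n,t}\sum_{k>K}E\langle S_n(t),e_k\rangle^2\to 0$, follow ``by applying the same estimate with $x=y=e_k$ and summing over $k$'' does not work: the covariance inequality bounds the $k$-th coordinate term by a multiple of $\lVert\langle X_0,e_k\rangle\rVert_{4+\delta}^2$, and $\sum_k \bigl(E\lvert\langle X_0,e_k\rangle\rvert^{4+\delta}\bigr)^{2/(4+\delta)}$ can be infinite even when $E\lVert X_0\rVert^{4+\delta}<\infty$ (take $X_0=\zeta e_J$ with $\zeta$ bounded and $P(J=k)\sim k^{-1-\epsilon}$; the $k$-th bound is then of order $k^{-2(1+\epsilon)/(4+\delta)}$, which is not summable). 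Moreover, trace class of the limiting $S$ alone would not give the required uniformity in $n$. This single gap propagates: in infinite dimensions the Cram\'{e}r--Wold device does not reduce fidi convergence in $H$ to one-dimensional projections without tightness of the marginals (convergence of $\langle Z_n,h\rangle$ for every $h$ does not imply $Z_n\Rightarrow Z$ in $H$; consider $Z_n=e_n$), and your marginal tightness rests precisely on the flawed flat-concentration step.

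The repair is the device the paper uses and you never invoke: work with the residual operator $A_k=I-P_k$ directly on the $H$-valued sequence. Since $A_k$ is Lipschitz with constant $1$, $(A_k(X_i))_{i}$ is $L_1$-NED with the \emph{same} constants $a_m$, so your own fourth-moment inequality applies to it with a constant independent of $k$ and, combined with the maximal inequality, yields
\begin{equation*}
\limsup_{n\to\infty} E\Bigl(\sup_{t\in[0,1]}\lVert W_n(t)-P_kW_n(t)\rVert^4\Bigr)\leq C\bigl(E\lVert A_k(X_1)\rVert^{4+\delta}\bigr)^{1/(1+\delta)}\xrightarrow{k\to\infty}0,
\end{equation*}
by Parseval's identity and dominated convergence. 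This one estimate supplies the uniform tail control (hence compact containment and the legitimacy of the projection reduction), and via the Chen--White three-condition lemma it replaces your flat-concentration argument entirely; trace class of $S$ then comes for free, since the limit is a Gaussian law on $H$. With this substitution the remainder of your plan (the covariance-series estimate, the scalar FCLT via mixing and coupling, the quadruple-sum moment bound) is sound and essentially coincides with the paper's proof.
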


\subsection{Sequential Bootstrap for $H$-valued random variables.} \label{Sec2}

Theorem 1 has some applications, for example change-point test (see section \ref{Applications}). However, the problem arises, that the limiting distribution may be unknown, or even if it is known, it depends on an infinite dimensional parameter, in our case the covariance operator $S$. \\
To circumvent this problem, we will use the non overlapping block bootstrap of \cite{Car} to construct a process with the same limiting distribution as $\frac{1}{\sqrt{n}} \sum_{i=1}^{\lfloor nt \rfloor} (X_i - \mu )$. \\
For a block length $p(n)$, consider the $k= \lfloor n/p \rfloor$ blocks $I_1, \dots , I_k$, defined by
\begin{align*}
I_j = (X_{(j-1)p +1}, \dots , X_{jp}) \ \ \ \ j=1,2, \dots, k.
\end{align*}
Then we draw $k$ times independently and with replacement from these blocks. The drawn blocks (bootstrap blocks) build up a bootstrap sample and satisfy 
\begin{align*}
 P \left ( (X_{(j-1)p +1}^{\ast}, \dots , X_{jp}^{\ast}) = I_i \right ) = \frac{1}{k} \ \ \text{for} \ \ i,j = 1, \dots , k.
 \end{align*}
Now we can define a bootstrapped version of the partial sum process
\begin{align}
W_{n,p}^{\ast} (t) = \frac{1}{\sqrt{kp}} \sum_{i=1}^{\lfloor kpt \rfloor} (X_i^{\ast} - E ^{\ast} X_i^{\ast}). \label{BootstrappedPartialsummenprozess}
\end{align}
As usual, $E^{\ast}$ and $P^{\ast}$ denote conditional expectation and probability, respectively, given $\sigma (X_1, \dots, X_n)$. Further, $\Rightarrow_{\ast}$ denotes weak convergence with respect to $P^{\ast}$.
 The next result establishes the asymptotic distribution of the process $W_{n,p}^{\ast} (t)$, defined in (\ref{BootstrappedPartialsummenprozess}).

\begin{The} \label{SeqBootstrapHilbertraum}
Let $(X_n)_{n \in \mathds{Z}}$ be $L_1$-near epoch dependent on a stationary, absolutely regular sequence $(\xi_n)_{n \in \mathds{Z}}$ with $EX_1 = \mu$ and assume that the following conditions hold for some $\delta > 0$
\begin{enumerate}
\item $E \lVert X_1 \rVert^{4 + \delta} < \infty$,
\item $ \sum_{m=1}^{\infty} m^2(a_m)^{\delta / (\delta+3)} < \infty $,
\item $ \sum_{m=1}^{\infty} m^2 (\beta (m))^{\delta / (\delta+4)} < \infty $.
\end{enumerate}
Further, let the block length be nondecreasing, $p(n) = O(n^{1-\epsilon})$ for some $\epsilon$ and $p(n) = p(2^l)$ for $n= 2^{l-1} +1, \cdots , 2^l$, for all $l \in \mathds N$. Then
\begin{align*}
\left( W_{n,p}^{\ast}(t) \right)_{t \in [0,1]} \Rightarrow_{\ast} \left(  W(t) \right)_{t \in [0,1]} \ \ \text{a.s.} \ \ ,
\end{align*}
where $(W(t))_{t \in [0,1]}$ is a Brownian motion in $H$ and $W(1)$ has the covariance operator $S \colon H \to H$, defined in Theorem \ref{FCLTHilbertraum}.
\end{The}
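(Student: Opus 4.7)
The plan is to combine the two classical ingredients for weak convergence in $D_H[0,1]$: convergence of the finite-dimensional distributions of $W^\ast_{n,p}$ to those of $W$, together with asymptotic tightness of $\{W^\ast_{n,p}\}$ in $D_H[0,1]$, both holding almost surely in the data. The structural observation driving the proof is that, conditionally on $\mathcal F_n = \sigma(X_1,\ldots,X_n)$, the bootstrap blocks are i.i.d.\ draws from the empirical distribution on $\{I_1,\ldots,I_k\}$; hence the bootstrap block sums $U_j = \sum_{i=(j-1)p+1}^{jp}(X^\ast_i - E^\ast X^\ast_i)$ form i.i.d.\ centered $H$-valued random variables under $P^\ast$. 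This reduces Theorem~\ref{SeqBootstrapHilbertraum} to a functional CLT for an independent (but triangular, data-dependent) array in $H$.

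For the finite-dimensional distributions I would use the Cram\'er--Wold device in $H$: fix $h \in H$ and consider the real-valued partial sum process $\langle W^\ast_{n,p}(\cdot), h \rangle$. Conditionally, its variance at $t=1$ equals the empirical block variance $\tfrac1{kp}\sum_{j=1}^k \langle U_j, h\rangle^2$. I would show this converges almost surely to $\langle Sh, h\rangle$: by Theorem~\ref{FCLTHilbertraum} applied to the real-valued NED sequence $\langle X_i - \mu, h\rangle$, the normalized long-run variance of its block sums is exactly $\langle Sh, h\rangle$, so only a strong law across blocks is needed. The dyadic condition $p(n) = p(2^l)$ on $n \in (2^{l-1}, 2^l]$ is designed to make this strong law transfer to the full sequence: variance bounds plus Borel--Cantelli give a.s.\ convergence along $n = 2^l$, and interpolation between dyadic points is automatic because the blocks themselves do not change within a window. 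Once the conditional variance converges a.s., the Lindeberg condition follows from $E\lVert X_1\rVert^{4+\delta}<\infty$, and the classical Donsker theorem for conditionally i.i.d.\ block sums, applied to each projection, lifts to joint fidi convergence via Cram\'er--Wold.

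For tightness in $D_H[0,1]$ I would establish a Billingsley-type moment bound of the form
\begin{align*}
E^\ast \bigl\lVert W^\ast_{n,p}(t+s) - W^\ast_{n,p}(t) \bigr\rVert^{2+\delta'} \leq C s^{1+\delta''},
\end{align*}
holding almost surely in the data for small $s > 0$. Since the increment is an independent sum of at most $\lceil sk\rceil + 1$ bootstrap block sums (plus within-block partial sums of magnitude $O(\sqrt{p/n})$ under $p(n) = O(n^{1-\epsilon})$, hence negligible), a Rosenthal-type inequality for $H$-valued sums reduces this to a uniform bound on the $(2+\delta')$-th moment of a single block sum, which in turn follows from the moment assumption together with the NED and absolute regularity coefficient bounds, essentially as in the proof of Theorem~\ref{FCLTHilbertraum}. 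Because the Brownian limit lies in $C_H[0,1]$, this modulus control suffices for Skorohod tightness in $D_H[0,1]$.

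The main obstacle is the almost sure convergence of the conditional covariance of the block sums to $pS$, simultaneously for all directions $h \in H$. The argument must coordinate three ingredients: absolute convergence of the covariance series from Theorem~\ref{FCLTHilbertraum}, an LLN along the dyadic subsequence $n = 2^l$ enabled by the block-length assumption, and a separability argument (testing against a countable dense subset of $H$) to upgrade pointwise-in-$h$ a.s.\ convergence to convergence at the operator level, which is what is needed to identify the limiting covariance in all projections on a single full-measure event. Once this cornerstone is in place, the fidi and tightness steps follow a Hilbert-space adaptation of the classical sequential block bootstrap template, essentially by projecting with Cram\'er--Wold and invoking Hilbert space moment inequalities.
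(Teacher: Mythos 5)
Your overall architecture (conditionally i.i.d.\ block sums, a.s.\ fidi convergence plus a.s.\ tightness, dyadic Borel--Cantelli to exploit $p(n)=p(2^l)$) matches the paper, but the fidi step as you describe it would fail: in an infinite-dimensional Hilbert space the Cram\'er--Wold device does not hold. Almost sure convergence of $\langle W^{\ast}_{n,p}(t),h\rangle$ for every $h$ --- even simultaneously over a countable dense set of directions on a single full-measure event, which is what your separability argument delivers --- identifies the limiting covariance but does not give weak convergence of $W^{\ast}_{n,p}(t)$ in $H$; mass can escape along high coordinates. What is missing is a flat-concentration control in the space variable, of the type in condition (iii) of Lemma \ref{ConditionWeakConvHilbertspace}, i.e.\ $\limsup_n E^{\ast}\sup_t \lVert W^{\ast}_{n,p}(t)-P_kW^{\ast}_{n,p}(t)\rVert^4 \to 0$ as $k \to \infty$, a.s.\ in the data. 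Your Billingsley-type increment bound $E^{\ast}\lVert W^{\ast}_{n,p}(t+s)-W^{\ast}_{n,p}(t)\rVert^{2+\delta'} \leq Cs^{1+\delta''}$ controls only the time modulus and cannot substitute for this: it is a bound on the norm of increments and says nothing about the tails $\sum_{i>k}\langle \cdot ,e_i\rangle^2$. The paper sidesteps the entire Lindeberg/Cram\'er--Wold route by invoking the already established a.s.\ conditional CLT for the bootstrapped sample mean of $H$-valued data from \cite{DeShWe}, which packages the infinite-dimensional tightness; increments $V^{\ast}_{n,kp}(t_i)-V^{\ast}_{n,kp}(t_{i-1})$ then converge to $\sqrt{t_i-t_{i-1}}\,N$ by rescaling that CLT, and conditional independence of the blocks yields the joint fidis.

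Two further points are treated too lightly. First, the incomplete-block remainder $R^{\ast}_{n,kp}(t)$ is not simply ``of magnitude $O(\sqrt{p/n})$'': the relevant quantity is $\sup_{t}\lVert R^{\ast}_{n,kp}(t)\rVert$, which involves a maximum over all $k$ blocks and all positions within a block; the paper controls it through the fourth-moment and maximal inequalities (Lemmas \ref{4teMomentenungleichung} and \ref{Maximalungleichung}) followed by Markov and Borel--Cantelli along $n=2^l$ --- this is one of the two places where the dyadic block-length assumption is actually consumed. Second, in your tightness step the ``uniform bound on the moment of a single block sum'' that Rosenthal's inequality requires is a bound on $E^{\ast}\lVert S^{\ast}_{n,1}\rVert^4$, which is an \emph{empirical} average $\frac{1}{k}\sum_{j}$ of fourth powers of block sums of the observed data; a bound in expectation over the data (which is what the NED/mixing moment estimates give directly) is not enough, and one needs the same dyadic Borel--Cantelli argument to make it hold a.s.\ conditionally, exactly as the paper does for its term $I_n$ (with the centering term handled by a strong law for $\bar X_{n,kp}$). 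These repairs are available under your assumptions, but together with the fidi issue above they constitute the actual substance of the proof rather than routine steps.
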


\section{Application to change point tests} \label{Applications}

\subsection{Change in the mean of $H$-valued data} \label{ChangeInMeanSection}

Let us consider the following change point problem. Given $X_1, \dots, X_n$, we want to test the null hypothesis
\begin{align*}
\text{H}_0 \colon \ \ \ \ EX_1 = \dots = EX_n
\end{align*}
against the alternative 
\begin{align*}
\text{H}_A \colon \ \ \ \ EX_1 = \dots = E X_{k} \not = E X_{k+1} = \dots = EX_n,
\end{align*}
for some $k \in \{1, \dots , n-1\}$. \\
For real-valued variables, asymptotics of CUSUM-type tests have been extensively studied by \cite{CsHo}. They investigated tests for i.i.d. data, weakly dependent data and for long range dependent processes. The third case was extended by \cite{DeRoTa}.\\
For functional data, \cite{BeGaHoKo} have developed estimators and tests for a change point in the mean, which is extended by \cite{HoeKo} and \cite{AsKi} to weakly dependent data. They use functional principal components, while - motivated by Theorems 1 and 2 - we bootstrap the complete functional data. Consider the test statistic
\begin{align*}
T_n = \max_{ 1 \leq m \leq n-1} \frac{1}{\sqrt n} \left \lVert \sum_{i=1}^m X_i - \frac{m}{n} \sum_{i=1}^n X_i \right \rVert
\end{align*}
and its bootstrap analogue
\begin{align*}
T_n^{\ast} = \max_{ 1 \leq m \leq kp-1} \frac{1}{\sqrt {kp}} \left \lVert \sum_{i=1}^m X_i^{\ast} - \frac{m}{kp} \sum_{i=1}^{kp} X_i^{\ast} \right \rVert.
\end{align*}
The next result states that $T_n$ and $T_n^{\ast}$ have the same limiting distribution, which is a direct consequence of Theorems\ref{FCLTHilbertraum} and \ref{SeqBootstrapHilbertraum} and the continuity of both the maximum function and the Hilbert space norm.

\begin{Cor} \label{LimitTheoremCusumHilbertraum}
(i) Under the conditions of Theorem \ref{FCLTHilbertraum} 
\begin{align*}
T_n \Rightarrow \max_{t \in [0,1]} \lVert W(t) - t W(1)\rVert,
\end{align*} 
where $(W(t))_{t \in [0,1]}$ is the Brownian motion defined in Theorem \ref{FCLTHilbertraum}. \\
(ii) Under the conditions of Theorem \ref{SeqBootstrapHilbertraum}
\begin{align*}
T_n^{\ast} \Rightarrow_{\ast} \max_{t \in [0,1]} \lVert W(t) - t W(1)\rVert \ \ \ \ \text{a.s.}
\end{align*} 
\end{Cor}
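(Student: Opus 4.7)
The plan is to deduce both statements from Theorems \ref{FCLTHilbertraum} and \ref{SeqBootstrapHilbertraum} via the continuous mapping theorem applied to a single functional on $D_H[0,1]$, namely
\[
\phi \colon D_H[0,1] \to \mathds R, \qquad \phi(f) = \sup_{t \in [0,1]} \lVert f(t) - t f(1) \rVert.
\]
Write $S_n(t) = n^{-1/2} \sum_{i=1}^{\lfloor nt \rfloor} (X_i - \mu)$. I would first show that $T_n = \phi(S_n)$. Under $H_0$, subtracting $\mu$ from every $X_i$ leaves the CUSUM unchanged; and on each subinterval $[m/n,(m+1)/n)$ the path $S_n$ is constant, so $t \mapsto S_n(t) - t\, S_n(1)$ is affine with values in $H$, its squared norm is a convex quadratic in $t$, and the supremum over that subinterval is attained at an endpoint. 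Hence the supremum over $[0,1]$ equals the maximum over $\{m/n : 0 \leq m \leq n\}$, which is $T_n$. The same reasoning gives $T_n^{\ast} = \phi(W_{n,p}^{\ast})$ up to a gap stemming from the fact that the bootstrap centering $E^{\ast} X_i^{\ast}$ depends on the position of $i$ within its block; that gap is supported on the last incomplete block of length at most $p$ and is uniformly $O(\sqrt{p/n}) = o(1)$ almost surely, hence asymptotically negligible.

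Next I would verify that $\phi$ is continuous at every $f \in C_H[0,1]$ in the Skorohod topology on $D_H[0,1]$: Skorohod convergence to a continuous limit is automatically uniform, so both $f_n(1) \to f(1)$ and $\sup_t \lVert f_n(t) - f(t)\rVert \to 0$, and then the triangle inequality together with continuity of the $H$-norm yield $\phi(f_n) \to \phi(f)$. Theorem \ref{FCLTHilbertraum} provides $S_n \Rightarrow W$ in $D_H[0,1]$ with $W \in C_H[0,1]$ almost surely, so the continuous mapping theorem gives $T_n \Rightarrow \phi(W) = \max_{t \in [0,1]} \lVert W(t) - t W(1)\rVert$, where the supremum is a maximum by continuity of $W$ on the compact interval; this is part (i). For (ii), Theorem \ref{SeqBootstrapHilbertraum} yields $W_{n,p}^{\ast} \Rightarrow_{\ast} W$ almost surely, and applying the continuous mapping theorem conditionally on the sample on the full-measure event of bootstrap validity produces $T_n^{\ast} \Rightarrow_{\ast} \phi(W)$ a.s.

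The only real work beyond invoking the two FCLTs lies in the two small geometric observations above: the reduction from the supremum over $[0,1]$ to the discrete maximum via convexity of the squared $H$-norm along affine paths, and the control of the position-dependent bootstrap centering on the last incomplete block so that $T_n^{\ast}$ agrees with $\phi(W_{n,p}^{\ast})$ up to an a.s.\ vanishing term. Everything else is a textbook continuous mapping argument applied to results already established.
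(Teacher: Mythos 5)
Your overall route coincides with the paper's: the authors give no separate proof of Corollary \ref{LimitTheoremCusumHilbertraum} at all, stating only that it is ``a direct consequence of Theorems \ref{FCLTHilbertraum} and \ref{SeqBootstrapHilbertraum} and the continuity of both the maximum function and the Hilbert space norm'' --- that is, exactly your continuous-mapping argument with $\phi(f)=\sup_{t\in[0,1]}\lVert f(t)-tf(1)\rVert$, and your observation that it suffices to check continuity of $\phi$ at points of $C_H[0,1]$ (where Skorohod convergence is uniform convergence) is the right way to make that sentence rigorous. So you have supplied more detail than the paper itself; in substance it is the intended proof.

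Two of your auxiliary claims are, however, incorrect as stated, though both are repairable. First, the exact identity $T_n=\phi(S_n)$ is false: your convexity argument correctly places the supremum over each piece $[m/n,(m+1)/n)$ at an endpoint, but the right-endpoint contribution is the left limit $\lVert S_n(m/n)-\tfrac{m+1}{n}S_n(1)\rVert$, which is not among the terms of $T_n$ (take $n=2$ with $S_n(1/2)=0$, $S_n(1)\neq 0$: then $\phi(S_n)=\lVert S_n(1)\rVert>\tfrac12\lVert S_n(1)\rVert=T_n$). The repair is one line: $T_n\le\phi(S_n)\le T_n+n^{-1}\lVert S_n(1)\rVert$, and $n^{-1}\lVert S_n(1)\rVert=O_P(n^{-1})$, so Slutsky finishes. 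Second, your localization of the bootstrap gap is wrong: the bootstrap sample consists of exactly $k$ complete blocks, so there is no ``last incomplete block.'' You correctly identify the source --- $E^{\ast}X_i^{\ast}$ depends on the position of $i$ within its block --- but the consequence is that $\sum_{i\le m}E^{\ast}X_i^{\ast}=\tfrac{m}{kp}\sum_{i\le kp}E^{\ast}X_i^{\ast}$ holds only at block boundaries $m=jp$; the discrepancy between $T_n^{\ast}$ and $\phi(W_{n,p}^{\ast})$ is therefore present at \emph{every} within-block index $m$, not just in one terminal block. It is uniformly dominated by a quantity of the form $\frac{1}{\sqrt{kp}}\max_{j\le k}\max_{s\le p}\lVert\sum_{r=1}^{s}(X_{(j-1)p+r}-\cdot)\rVert$, which is precisely the term $Y_{n,kp}$ that the paper proves tends to $0$ almost surely in the proof of Theorem \ref{SeqBootstrapHilbertraum}, via the dyadic block-length assumption, the fourth-moment bound of Lemma \ref{4teMomentenungleichung}, the maximal inequality of Lemma \ref{Maximalungleichung}, and Borel--Cantelli. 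Note that your assertion of an almost-sure bound $O(\sqrt{p/n})$ cannot be had from a single moment bound; a moment bound gives only an in-expectation rate, and the almost-sure statement genuinely needs that Borel--Cantelli machinery, which you should cite or reproduce. With these two patches your proof is complete.
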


Next we derive the asymptotic distribution of the (bootstrapped) change-point statistic under a sequence of converging alternatives. Define the triangular array of $H$-valued random variables
\begin{align*}
Y_{n,i} = \begin{cases}
     X_i & \text{if } i \leq \lfloor n \tau \rfloor \\
     X_i + \Delta_n & \text{if } i >  \lfloor n \tau \rfloor
     \end{cases}
\end{align*}
for $n \in \mathds N$ and $i \leq n$. Here, $\lfloor n \tau \rfloor$  is the unknown change-point for some $\tau \in (0,1)$ and $(\Delta_n)_n$ is an $H$-valued  deterministic sequence with
\begin{align*}
\lVert \sqrt{n} \Delta_n - \Delta \rVert \rightarrow 0,
\end{align*}
for $ n \to \infty$ and some $\Delta \in H$.\\
Now we want to test the Hypothesis $\Delta_n =0$ against the sequence of Alternatives where $\Delta, \Delta_n \in H \setminus \{0\}$. \\
Note that a bootstrap sample $(Y_{n,i}^{\ast})_{i \leq kp, n \geq 1}$ can be created analogously  to $(X_{i}^{\ast})_{i \leq kp}$. Then we can define the statistics $T_n$ and $T_n^{\ast}$, now based on $Y_{n,i}$ and $Y_{n,i}^{\ast}$, respectively.

\begin{Cor} \label{LimitTheoremCusumHilbertraumAlternative}
(i) Consider an array $(Y_{n,i})_{n \in \mathds N, i \leq n}$. If the conditions of Theorem \ref{FCLTHilbertraum}  hold for $(X_i)_{i \geq 1}$, then under the sequence of local alternatives
\begin{align}
T_n \Rightarrow \max_{t \in [0,1]} \lVert W(t) - t W(1)+ \phi_{\tau}(t) \Delta \rVert, \label{CUSUMNormalRot}
\end{align} 
where $(W(t))_{t \in [0,1]}$ is the Brownian Motion defined in Theorem \ref{FCLTHilbertraum}  and the function $ \phi_{\tau} \colon [0,1] \to \mathds R$ is defined by
\begin{align*}
\phi_{\tau} (t) = \begin{cases} 
 t (1-\tau) & \text{if } t \leq \tau \\
 (1-t)\tau & \text{if } t > \tau.
 \end{cases}
 \end{align*}
(ii) If the conditions of Theorem \ref{SeqBootstrapHilbertraum} are satisfied, then under the sequence of local alternatives
\begin{align}
T_n^{\ast} \Rightarrow_{\ast} \max_{t \in [0,1]} \lVert W(t) - t W(1)\rVert \ \ \text{a.s.} \ \ . \label{CUSUMBootsRot}
\end{align} 
\end{Cor}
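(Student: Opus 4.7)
The plan for part (i) is to decompose the partial sum of the triangular array into the unperturbed process plus a deterministic drift. For $m=\lfloor nt\rfloor$, direct calculation from the definition of $Y_{n,i}$ yields
\begin{align*}
\frac{1}{\sqrt n}\Bigl(\sum_{i=1}^m Y_{n,i}-\frac{m}{n}\sum_{i=1}^n Y_{n,i}\Bigr)
 = \frac{1}{\sqrt n}\Bigl(\sum_{i=1}^m X_i-\frac{m}{n}\sum_{i=1}^n X_i\Bigr)+g_n(t),
\end{align*}
where $g_n(t)\in H$ equals $-\frac{m}{n}\cdot\frac{n-\lfloor n\tau\rfloor}{n}\sqrt n\,\Delta_n$ for $m\le\lfloor n\tau\rfloor$ and $-\frac{\lfloor n\tau\rfloor}{n}\cdot\frac{n-m}{n}\sqrt n\,\Delta_n$ for $m>\lfloor n\tau\rfloor$. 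Since $\sqrt n\,\Delta_n\to\Delta$ in $H$, the function $g_n$ converges uniformly in $t$ to $-\phi_\tau(t)\Delta$. By Theorem~\ref{FCLTHilbertraum} the stochastic summand converges in $D_H[0,1]$ to $W(t)-tW(1)$, joint convergence with a deterministic sequence is automatic, and the continuous map $f\mapsto\max_{t\in[0,1]}\lVert f(t)\rVert$ gives $T_n\Rightarrow\max_t\lVert W(t)-tW(1)-\phi_\tau(t)\Delta\rVert$. Because $-W$ has the same distribution as $W$, this equals in law the right-hand side of~(\ref{CUSUMNormalRot}).

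For part (ii), write $Y_{n,i}^{\ast}=X_i^{\ast}+\varepsilon_i\Delta_n$, where $\varepsilon_i\in\{0,1\}$ indicates whether the $i$-th bootstrap observation stems from a post-change index in the original sample. Linearity gives
\begin{align*}
\sum_{i=1}^m Y_{n,i}^{\ast}-\frac{m}{kp}\sum_{i=1}^{kp}Y_{n,i}^{\ast}
 =\sum_{i=1}^m X_i^{\ast}-\frac{m}{kp}\sum_{i=1}^{kp}X_i^{\ast}+\Delta_n\,B_n(m),
\end{align*}
with $B_n(m)=\sum_{i=1}^m\varepsilon_i-\frac{m}{kp}\sum_{i=1}^{kp}\varepsilon_i\in\mathds R$. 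Corollary~\ref{LimitTheoremCusumHilbertraum}(ii) applied to the $X^{\ast}$-based statistic already produces the target weak limit almost surely in the data; by Slutsky's theorem it therefore suffices to show $\lVert\Delta_n\rVert(kp)^{-1/2}\max_{1\le m\le kp}|B_n(m)|\to 0$ in $P^{\ast}$-probability.

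Within every drawn block the $\varepsilon_i$ are identically zero or identically one, except in at most one block that contains the original change-point $\lfloor n\tau\rfloor$. Setting $Z_j=\sum_{r=1}^p\varepsilon_{(j-1)p+r}\in[0,p]$, the $Z_j$ are i.i.d.\ under $P^{\ast}$; at block boundaries $m=Jp$ one has $B_n(Jp)=\sum_{j=1}^J(Z_j-\mu_{\ast})-(J/k)\sum_{j=1}^k(Z_j-\mu_{\ast})$ with $\mu_{\ast}=E^{\ast}Z_1$, and for general $m$ the value differs from the nearest block-boundary version by at most $p$. Doob's maximal inequality applied to the $P^{\ast}$-martingale $(\sum_{j=1}^J(Z_j-\mu_{\ast}))_J$ yields $E^{\ast}[\max_{J\le k}(\sum_{j=1}^J(Z_j-\mu_{\ast}))^2]\le 4k\,\mathrm{Var}^{\ast}(Z_1)\le 4kp^2$, hence $E^{\ast}(\max_m|B_n(m)|)^2\le C\,kp^2$ for a universal $C$, and consequently
\begin{align*}
E^{\ast}\Bigl[\frac{\lVert\Delta_n\rVert^2}{kp}\max_{1\le m\le kp}B_n(m)^2\Bigr]\le C\,p\,\lVert\Delta_n\rVert^2 = O(p/n),
\end{align*}
which vanishes deterministically under $p=O(n^{1-\epsilon})$, giving~(\ref{CUSUMBootsRot}). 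The main technical obstacle is this uniform-in-$m$ bound; everything else is a routine combination of Theorems~\ref{FCLTHilbertraum} and~\ref{SeqBootstrapHilbertraum}, the continuous mapping theorem, and the symmetry of Brownian motion.
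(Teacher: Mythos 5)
Your proposal is correct, but it reaches part (ii) by a genuinely different route than the paper. For part (i) the paper gives no argument at all, merely citing Theorem 2.1 of \cite{DeRoTa2}; your explicit computation of the deterministic drift $g_n(t)$, its uniform convergence to $-\phi_\tau(t)\Delta$, and the final appeal to the symmetry $W \stackrel{d}{=} -W$ is precisely the standard argument behind that citation, so there is nothing to compare there. For part (ii) the paper starts from the same decomposition as you do — bootstrap CUSUM of the $Y^\ast$'s equals bootstrap CUSUM of the $X^\ast$'s plus $\sqrt{kp}\,\Delta_n R_{n,k,p}(t)$ — but then expands the remainder $R_{n,k,p}$ into six explicit terms built from the indicators $1_{\{U_i > \lfloor k\tau\rfloor+1\}}$ and $1_{\{U_i = \lfloor k\tau\rfloor+1\}}$ (whole shifted blocks, the straddling block, and rounding/incomplete-block effects treated separately) and kills them by a uniform law of large numbers, $\frac1k\sum_{i\le\lfloor kt\rfloor} 1_{\{U_i>\lfloor k\tau\rfloor+1\}} \xrightarrow{P} t(1-\tau)$ uniformly in $t$, so that the resampled-count process and its CUSUM centering cancel at first order. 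You instead absorb everything into the single centered count process $B_n(m)$, reduce to block boundaries where $B_n(Jp)$ is a CUSUM of the i.i.d.\ variables $Z_j\in[0,p]$, and apply Doob's maximal inequality to get the quantitative bound $E^{\ast}\max_m B_n(m)^2 \le Ckp^2$, whence the drift is $O(\sqrt{p/n})$ in $L^2(P^{\ast})$ with a deterministic bound, so the almost-sure Slutsky step is immediate. Your route buys a self-contained, quantitative estimate that needs no case analysis over block types; the paper's route makes more transparent \emph{why} the bootstrap limit is the null limit — resampling spreads the shifted blocks uniformly, so the drift cancels in expectation and the LLN does the rest. Two harmless slips in your write-up: the interpolation error between $B_n(m)$ and the nearest boundary value $B_n(Jp)$ is at most $2p$, not $p$ (both $\sum_{i=Jp+1}^m\varepsilon_i$ and $\frac{m-Jp}{kp}\sum_{i=1}^{kp}\varepsilon_i$ are bounded by $p$), and since blocks are drawn with replacement, more than one drawn block can be a copy of the block straddling $\lfloor n\tau\rfloor$; neither matters, as your bound only uses $0\le Z_j\le p$ and the independence of the $U_j$.
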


The above Corollaries motivate the following test procedure, which is typical for bootstrap tests:
\begin{enumerate}
\item[(i)] Compute $T_n$.
\item[(ii)] Simulate $T_{j,n}^{\ast}$ for $j=1, \dots , J$.
\item[(iii)] Based on the independent (conditional on $X_1, \dots , X_n$) random variables $T_{n,1}^{\ast}, \dots , T_{n,J}^{\ast}$, compute the empirical $(1-\alpha)$- quantile $q_{n,J}(\alpha)$.
\item[(iv)] If $T_n > q_{n,J}(\alpha)$ reject the null hypothesis.
\end{enumerate}
By Corollary \ref{LimitTheoremCusumHilbertraum} and the Glivenko-Cantelli Theorem, the proposed test has an asymptotically significance level of $\alpha$, whereas by Corollary \ref{LimitTheoremCusumHilbertraumAlternative}, it has asymptotically nontrivial power. The deterministic element $\Delta \in H$ describes the amount of the change, while $\phi_{\tau}$ describes its location. Together they discriminate the limits of (\ref{CUSUMNormalRot}) and (\ref{CUSUMBootsRot}) and hence they are responsible for the asymptotic power. Note that the maximum of $\phi_{\tau}$ is $\tau (1 -\tau)$. Thus the power decreases drastically if the change occurs near the beginning of the observation period. \\
The above test problem is that of at most one change point (AMOC). However, especially in functional time series multiple changes are thinkable. Our statistic can be extended to allow such alternatives in the same way as the classical CUSUM statistic, see \cite{ErLo}.

\subsection{Change in the marginal distribution}

We will now apply the results to random variables, whose realizations are not truly functional. Consider, for example, the real valued random variables $X_1, \dots , X_n$ and the problem testing for changes in the underlying distribution:
\begin{align*}
\text{H}_0 \colon \ \ \ \ P(X_1 \leq t) = \dots = P(X_n \leq t ) \ \ \forall t \in \mathds R
\end{align*}
against  
\begin{align*}
\text{H}_A \colon \ \ \ \ P(X_1 \leq t) = \dots = P( X_{k}\leq t) \not = P( X_{k+1} \leq t) = \dots = P(X_n \leq t),
\end{align*}
for some $k \in \{1, \dots , n-1\}$ and $t \in \mathds R$. \\
Asymptotic tests have been investigated by \cite{CsHo}, \cite{HoSh} and \cite{Szy} in the independent case, by \cite{Ino} for strong mixing data, and by \cite{GiLeSu} for long-memory linear processes. Common test statistics depend on the empirical distribution function, and therefore on the indicators
\begin{align}
1_{ \{ X_i \leq t \} }, \ \ t \in \mathds R. \label{Indikatorfunktion}
\end{align}
 Those can be interpreted as random functions and hence random elements of the Hilbert space of functions $f \colon \mathds R \to \mathds R$, equipped with the inner product
\begin{align*}
\langle f, g \rangle_w = \int_{\mathds R} f(t)g(t) w(t) \ dt
\end{align*}
for some positive, bounded weight function with $\int_{\mathds R} w(t) \ d t < \infty$. \\
By Fubini's Theorem, we have
\begin{align*}
E \left [ \int_{\mathds R} 1_{ \{ X_i \leq t \} } h(t) w(t) \ dt \right ] = \int_{\mathds R}F(t) h(t) w(t) \ dt \ \ \forall h \in H.
\end{align*}
Hence by the definition, it follows that the mean of  (\ref{Indikatorfunktion}) is just the distribution function of $X$. So the change in the mean-problem (in $H$) becomes a change in distribution-problem (in $\mathds R$). \\
Furthermore, the arithmetic mean becomes the empirical distribution function. Note that this still holds when we consider $\mathds R^d$-valued data, which leads to the following test statistic
\begin{align}
T_{n,w} = \max_{ 1 \leq m \leq n-1} \frac{1}{n} \int_{\mathds R^d} \left( \sum_{i=1}^m 1_{ \{X_i \leq t \} } - \frac{m}{n} \sum_{i=1}^n 1_{ \{X_i \leq t \} } \right)^2 w(t) \ dt. \label{UnsereTestStatistikVerteilung}
\end{align}
This can be described as a Cram\'{e}r-von Mises change-point statistic. In the $\mathds R^d$-valued case, the weight function is a positive function $ w \colon \mathds R^d \to \mathds R$ with
\begin{align*}
\int _{\mathds R^d} w(t) \ dt < \infty.
\end{align*}
The empirical process has been bootstrapped by \cite{Buel} and \cite{NaRa} and recently by \cite{DoLaLeNe} using the wild bootstrap and by \cite{KoYa} using the weighted bootstrap. \\
Our bootstrapped version of (\ref{UnsereTestStatistikVerteilung}) is
\begin{align}
T_{n,w}^{\ast} = \max_{ 1 \leq m \leq kp-1} \frac{1}{kp} \int_{\mathds R^d} \left( \sum_{i=1}^m 1_{ \{X_i^{\ast} \leq t \} } - \frac{m}{kp} \sum_{i=1}^{kp} 1_{ \{X_i^{\ast} \leq t \} } \right)^2 w(t) \ dt , \label{UnsereTestStatistikVerteilungBoots}
\end{align}
where the sample $X_1^{\ast}, \dots, X_{kp}^{\ast}$ is produced by the non-overlapping block bootstrap.\\
We will now state conditions, under which the bootstrap method is justified.

\begin{Cor} \label{LimitTheoremDistributionalChange} 
Let $(X_n)_{n \in \mathds N}$ be $\mathds R ^d$ valued random variables, $L_1$- near epoch dependent on a stationary, absolutely regular sequence $(\xi_n)_{n \in \mathds Z}$, such that for some $\delta > 0$ 
\begin{enumerate}
\item [1.] $ \sum_{m=1} ^{\infty} m^2 (a_m)^{\delta / (1+ 4 \delta)} < \infty$,
\item [2.] $ \sum_{m=1}^{\infty} m^2 (\beta_m)^{\delta / (\delta+3)} < \infty$.
\end{enumerate} 
Let the block length $p$ be nondecreasing with $p(n) = O(n^{1- \epsilon})$ for some $\epsilon >0$ and $p(n) = p(2^l)$ for $n=2^{l-1} +1 , \dots ,2^l$. \\
Then, almost surely, the conditional distribution of $T_{n,w}^{\ast}$, given $X_1, \dots, X_n$, converges to the same limit as the distribution of $T_{n,w}$, as $n \to \infty$.
\end{Cor}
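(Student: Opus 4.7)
The plan is to reduce the Corollary to Theorems \ref{FCLTHilbertraum} and \ref{SeqBootstrapHilbertraum} by reinterpreting the indicator functions as elements of a Hilbert space. Set $H := L^2(\mathds R^d, w(t)\,dt)$ with inner product $\langle f,g\rangle_w$, which is a separable Hilbert space, and define $Y_i(t) := \mathbf 1_{\{X_i \leq t\}}$ together with the analogous bootstrap variables $Y_i^\ast(t) := \mathbf 1_{\{X_i^\ast \leq t\}}$. A direct rewriting of (\ref{UnsereTestStatistikVerteilung}) and (\ref{UnsereTestStatistikVerteilungBoots}) yields
\begin{align*}
T_{n,w} = \max_{1 \leq m \leq n-1} \Bigl\lVert \tfrac{1}{\sqrt n} \sum_{i=1}^m Y_i - \tfrac{m}{n\sqrt n} \sum_{i=1}^n Y_i \Bigr\rVert^2, \qquad T_{n,w}^\ast = \max_{1 \leq m \leq kp-1} \Bigl\lVert \tfrac{1}{\sqrt{kp}} \sum_{i=1}^m Y_i^\ast - \tfrac{m}{kp\sqrt{kp}} \sum_{i=1}^{kp} Y_i^\ast \Bigr\rVert^2,
\end{align*}
so both statistics are continuous (maximum of squared norm) functionals of the partial sum processes of $(Y_i)$ and $(Y_i^\ast)$. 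Once Theorems \ref{FCLTHilbertraum} and \ref{SeqBootstrapHilbertraum} can be applied to the $H$-valued sequence $(Y_i)$ with some $\delta'>0$, the argument of Corollary \ref{LimitTheoremCusumHilbertraum} together with the continuous mapping theorem will give that $T_{n,w}$ and $T_{n,w}^\ast$ share the limit $\max_{t \in [0,1]} \lVert W(t) - tW(1)\rVert^2$, which is the claim.

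The moment condition on $(Y_i)$ is automatic: $\lVert Y_i\rVert^2 \leq \int w(t)\,dt < \infty$ deterministically, so every absolute moment of $\lVert Y_i\rVert$ is finite. The mixing coefficients $\beta(\cdot)$ refer to the same base sequence $(\xi_i)$, so the remaining task is to verify $L_1$-near epoch dependence of $(Y_i)$ with a suitable rate.

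For this key step, set $\tilde X_0 := E[X_0\mid \mathcal F_{-k}^k]$ and $\tilde Y_0(t) := \mathbf 1_{\{\tilde X_0 \leq t\}}$. By the $L^2$-projection property of $H$-valued conditional expectation,
\begin{align*}
E\lVert Y_0 - E[Y_0\mid \mathcal F_{-k}^k]\rVert^2 \leq E\lVert Y_0 - \tilde Y_0\rVert^2.
\end{align*}
Combining $|\prod_j a_j - \prod_j b_j| \leq \sum_j |a_j - b_j|$ for $a_j,b_j \in [0,1]$, the idempotence $|\mathbf 1_A - \mathbf 1_B|^2 = |\mathbf 1_A - \mathbf 1_B|$, and Fubini applied to the marginals of $w$, one obtains a deterministic bound $\lVert Y_0 - \tilde Y_0\rVert^2 \leq C \lVert X_0 - \tilde X_0\rVert_{\mathds R^d}$. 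Jensen's inequality and the $L_1$-NED hypothesis on $(X_i)$ then give $E\lVert Y_0 - E[Y_0\mid \mathcal F_{-k}^k]\rVert \leq C' a_k^{1/2}$, so $(Y_i)$ is $L_1$-NED on $(\xi_i)$ with rate $b_k := C' a_k^{1/2}$.

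It remains to choose $\delta'$ so that the summability requirements of Theorem \ref{SeqBootstrapHilbertraum} for $(Y_i)$, namely $\sum_m m^2 a_m^{\delta'/(2(\delta'+3))} < \infty$ and $\sum_m m^2 \beta(m)^{\delta'/(\delta'+4)} < \infty$, are implied by the hypotheses of the Corollary. Both exponents are strictly increasing in $\delta'$, with suprema $1/2$ and $1$ respectively, whereas the Corollary's exponents $\delta/(1+4\delta)$ and $\delta/(\delta+3)$ are strictly smaller; picking $\delta'$ sufficiently large (e.g.\ $\delta' = 6\delta$) makes the new exponents exceed those in the hypothesis, so that $a_m^{\delta'/(2(\delta'+3))} \leq a_m^{\delta/(1+4\delta)}$ and the analogous bound for $\beta$ hold for all large $m$, and the assumed summability dominates the required one. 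The principal obstacle is the NED-inheritance step: the non-smoothness of the indicator forces a square-root loss in the rate (which is exactly what drives the modified exponent $\delta/(1+4\delta)$ in hypothesis~1), and some care with the integrability of the weight $w$ across coordinate directions is required to obtain the clean deterministic Hilbert-norm bound used above.
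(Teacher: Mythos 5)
Your proposal is correct and follows the same overall reduction as the paper (indicators as elements of $L^2(\mathds R^d, w(t)\,dt)$, then Corollary \ref{LimitTheoremCusumHilbertraum} via Theorems \ref{FCLTHilbertraum} and \ref{SeqBootstrapHilbertraum}), but you handle the one substantive step differently. The paper disposes of the dependence condition in two lines by citing Lemma 2.2 of \cite{DeShWe} together with the ``Lipschitz-continuity'' of $x \mapsto 1_{\{x \leq \cdot\}}$, and never exhibits the exponent bookkeeping; you instead prove the NED-inheritance directly, using the $L^2$-minimizing property of Hilbert-space conditional expectation against the competitor $\tilde Y_0 = 1_{\{\tilde X_0 \leq \cdot\}}$ and the deterministic bound $\lVert Y_0 - \tilde Y_0\rVert^2 \leq C\lVert X_0 - \tilde X_0\rVert$, yielding the rate $b_k = C' a_k^{1/2}$. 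This is more honest than the paper's wording: the map $x \mapsto 1_{\{x \leq \cdot\}}$ into $L^2(w)$ is not Lipschitz but only H\"older of order $1/2$, and your explicit square-root loss is precisely what explains the otherwise mysterious exponent $\delta/(1+4\delta)$ in hypothesis~1, which the paper asserts without computation; your verification that $\delta' = 6\delta$ satisfies both $\delta'/(2(\delta'+3)) \geq \delta/(1+4\delta)$ and $\delta'/(\delta'+4) \geq \delta/(\delta+3)$ checks out. You also correctly flag the one point both arguments must quietly assume, namely that the coordinate marginals $t_j \mapsto \int w(t)\,dt_{-j}$ are bounded (boundedness and integrability of $w$ alone do not guarantee this for $d \geq 2$), a hypothesis the paper's citation-based proof glosses over entirely; making it explicit, as you do, is a genuine improvement in rigor at essentially no cost.
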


Note, that producing a bootstrap sample $X_1^{\ast}, \dots , X_{kp}^{\ast}$ first, and then treating the indicators 
\begin{align*}
1_{ \{ X_1^{\ast} \leq \cdot \} }, \dots , 1_{ \{ X_{kp}^{\ast} \leq \cdot \} },
\end{align*}
is the same as if we first look upon the indicators as $H$-valued random variables $Y_1, \dots , Y_n$ and then generate $Y_1^{\ast}, \dots , Y_{kp}^{\ast}$. \\
Now we can apply Corollary \ref{LimitTheoremCusumHilbertraum} and therefore we have to verify the conditions of Theorems \ref{FCLTHilbertraum} and \ref{SeqBootstrapHilbertraum}, respectively. \\
The moment condition is automatically satisfied, due to the definition of $w(t)$ and the dependence conditions are satisfied because of Lemma 2.2 in Dehling, Sharipov and Wendler \cite{DeShWe} and the Lipschitz-continuity of the mapping $x \mapsto 1_{ \{ x \leq \cdot \} }$.

Note, that producing a bootstrap sample $X_1^{\ast}, \dots , X_{kp}^{\ast}$ first, and then analyzing the indicators 
\begin{align*}
1_{ \{ X_1^{\ast} \leq \cdot \} }, \dots , 1_{ \{ X_{kp}^{\ast} \leq \cdot \} },
\end{align*}
is the same as if we first look upon the indicators as $H$-valued random variables $Y_1, \dots , Y_n$ and then generate $Y_1^{\ast}, \dots , Y_{kp}^{\ast}$. \\
We can apply Corollary 1, if we can verify the conditions of Theorems \ref{FCLTHilbertraum} and \ref{SeqBootstrapHilbertraum}, respectively. The moment condition is automatically satisfied, due to the definition of $w(t)$, and the dependence conditions are satisfied because of Lemma 2.2 in \cite{DeShWe} and the Lipschitz-continuity of the mapping $x \mapsto 1_{ \{ x \leq \cdot \} }$.

\begin{Rem}
Instead of the inner product we have defined one can use
\begin{align*}
\langle f,g \rangle _{1} = \int f(t)g(t) \ dt \ \ \ \ \text{or} \ \ \ \ \langle f,g \rangle _{dF} = \int f(t)g(t) \ dF(t),
\end{align*}
which lead to  well known change point statistics. Note that in the first case the norm of the indicator $1_{ \{ X_1 \leq \cdot \} }$ is infinite, which is remedied by considering $1_{ \{ X_1 \leq \cdot \} } - F(\cdot)$. Additional moment assumptions on the $X_i$ may be needed to make Corollary 3 hold also in this case.
\end{Rem}

\section{Real-life data examples} \label{SecRealLife}

\begin{figure}[h!] 
\centering
\includegraphics [width=\textwidth]{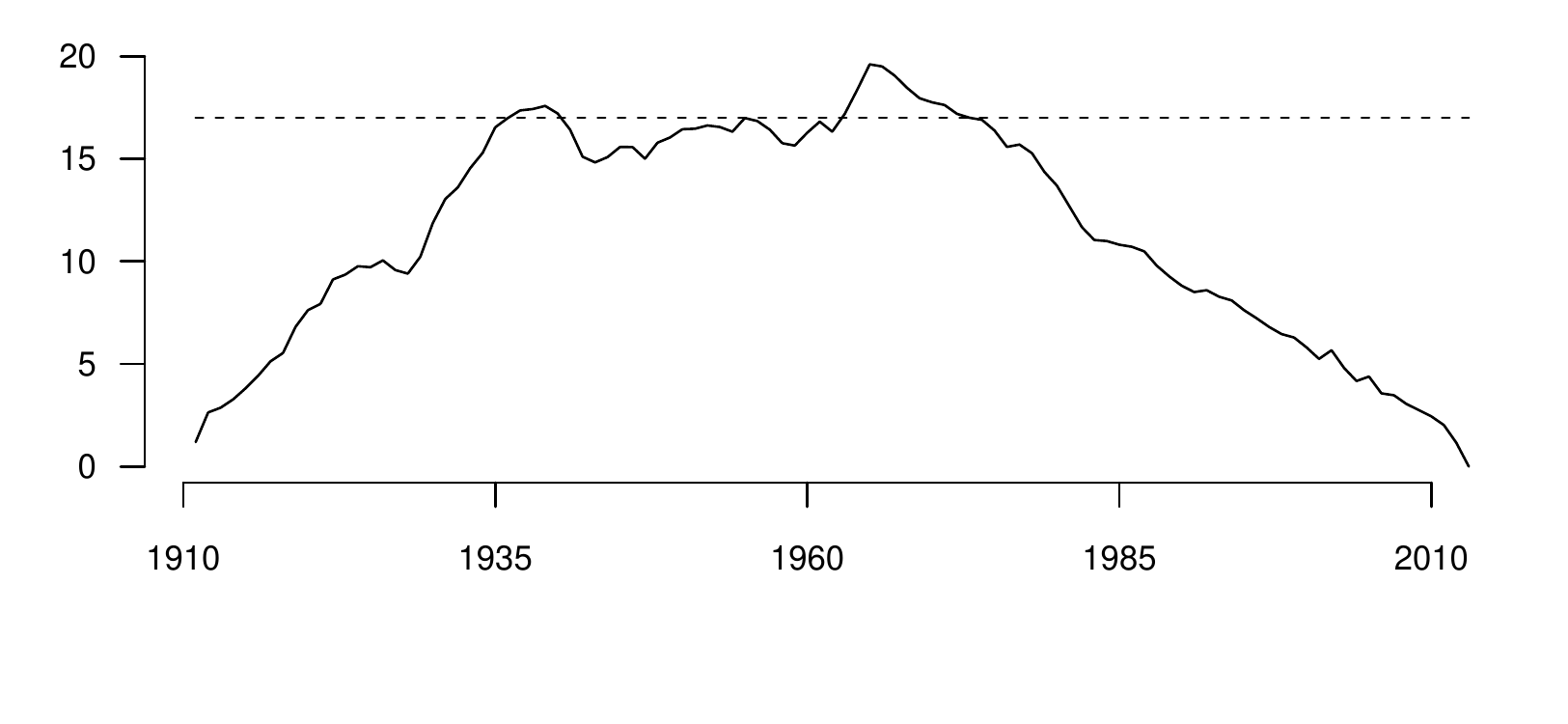}
\caption{Process $\frac 1 {\sqrt n} \lVert \bar X_k - k/n \bar X_n \rVert$ (black line) computed from $103$ annual flow curves of the river Chemnitz and $0.95$ level of significance (dashed line) computed from $999$ bootstrap iterations.}
\label{Abb1}
\end{figure}

\begin{figure}[h!]
\centering
\includegraphics [width=\textwidth]{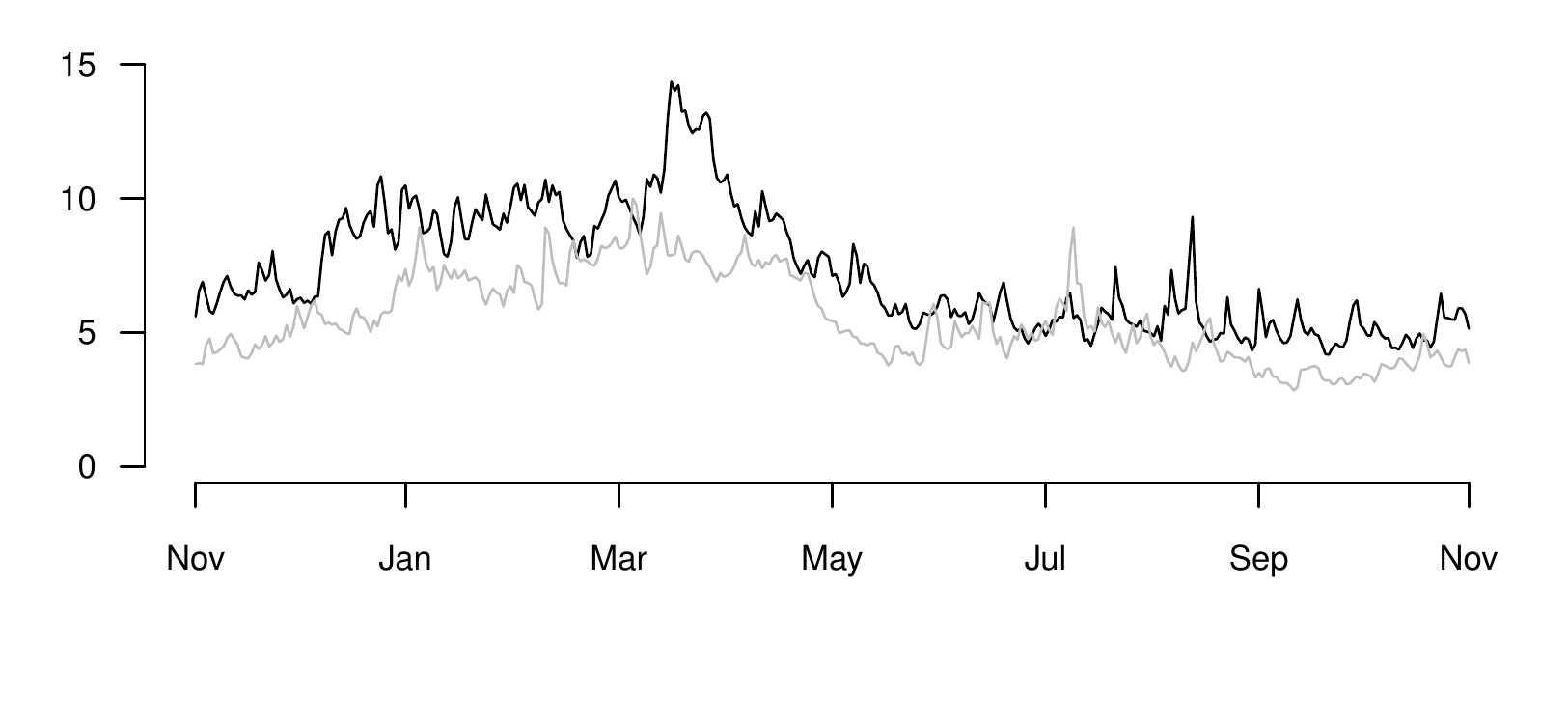}
\caption{Average annual flow curves of the time period 1910 - 1964 (grey line) and the time period 1965 - 2012 (black line).}
\label{Abb2}
\end{figure}

To illustrate our methods we apply the tests, described in the previous subsections, to hydrological observations. \\
The first data set contains average daily flows of the river Chemnitz at Goeritzhain for the time period 1910 - 2012. Thus one gets 103 annual flow curves which can be interpreted as realizations of $\mathds R^{365}$-valued random variables that are dependent over time. Alternatively one could smoothen the curves and hence get functional data. 
Let $X_i$ be the $i$th annual curve, taking its value in $\mathds R^{365}$. Figure \ref{Abb1} shows the process 
\begin{align*}
 \frac{1}{\sqrt n} \left \lVert \sum_{i=1}^k X_i - \frac{k}{n} \sum_{i=1}^n X_i \right \rVert   \ \ \ \ k=1, \dots n-1.
\end{align*}
 The value of the test statistic is the maximum of this process, which is attained in 1964. Because it is larger than the bootstrapped $5 \%$ level of significance, the test indicates that there has been a change in structure of the annual flow curves. \\
Figure \ref{Abb2} illustrates the character of this change by comparing the average flow curves based on the data before and after 1964. \\
Of course there are other methods to deal with this data set. One might adopt the methodology of \cite{RoLuGaLu}, used to detect changes in storm frequency and strengths. Here one might jointly test for changes in the yearly flood counts and the corresponding river heights. 
\\\\
As a second example, we look at annual maximum flows (the flows are annual maximums over daily observations) of the river Elbe at Dresden for the time period 1850 - 2012, see figure \ref{Abb3a}. 

\begin{figure}[h!]
\centering
\includegraphics [width=\textwidth]{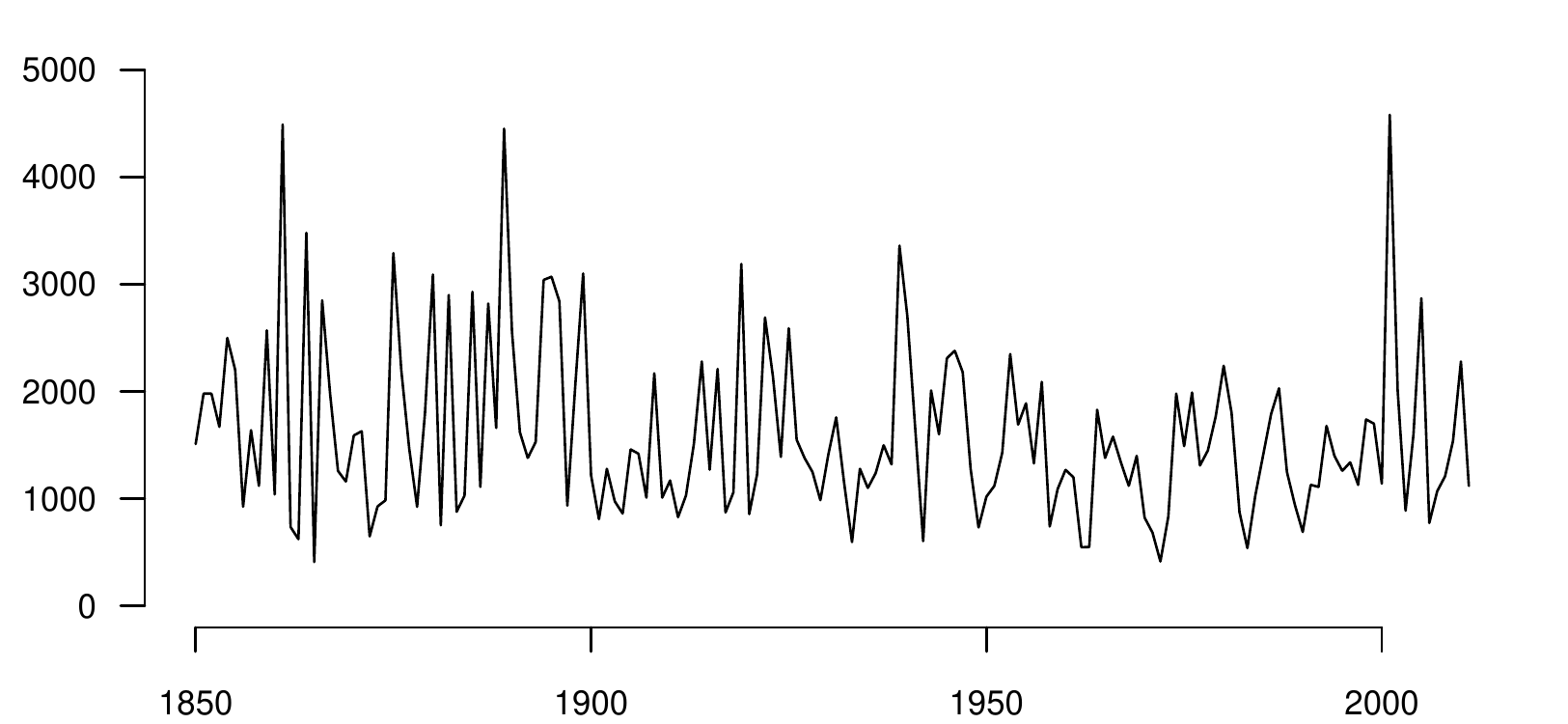}
\vspace{20pt}
\caption{Annual maximum flows of the river Elbe at Dresden from 1850 to 2012.}
\label{Abb3a}
\end{figure}

\begin{figure}[h!]
\centering
\includegraphics [width=\textwidth]{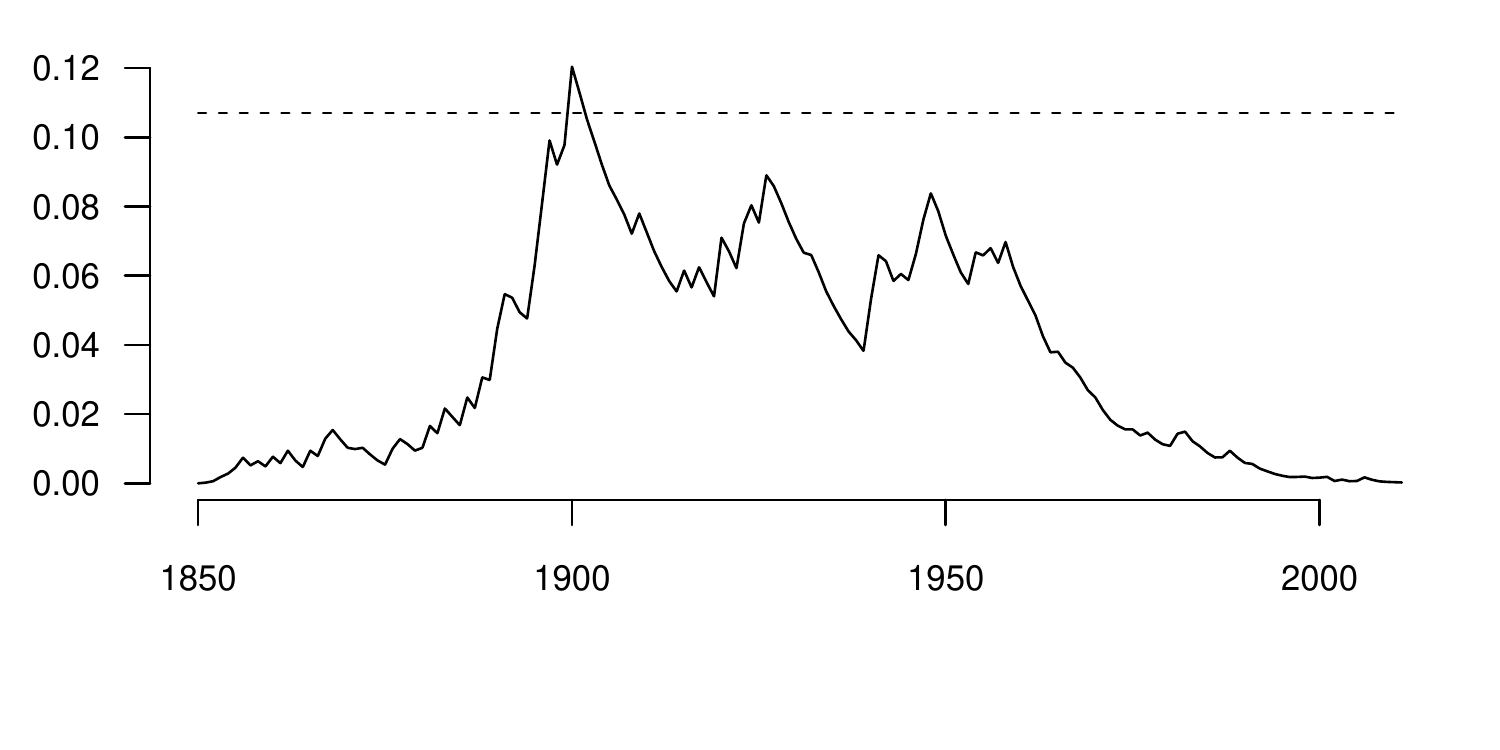}
\caption{Process $\frac 1 n \int (\hat F_k(x) -k/n \hat F_n(x)^2  \phi (x) dx$ (black line) computed from $163$ annual maximum flows of the river Elbe and $0.95$ level of significance (dashed line) computed from $999$ bootstrap iterations.}
\label{Abb3}
\end{figure}

\begin{figure}[h!]
\centering
\includegraphics [width=\textwidth]{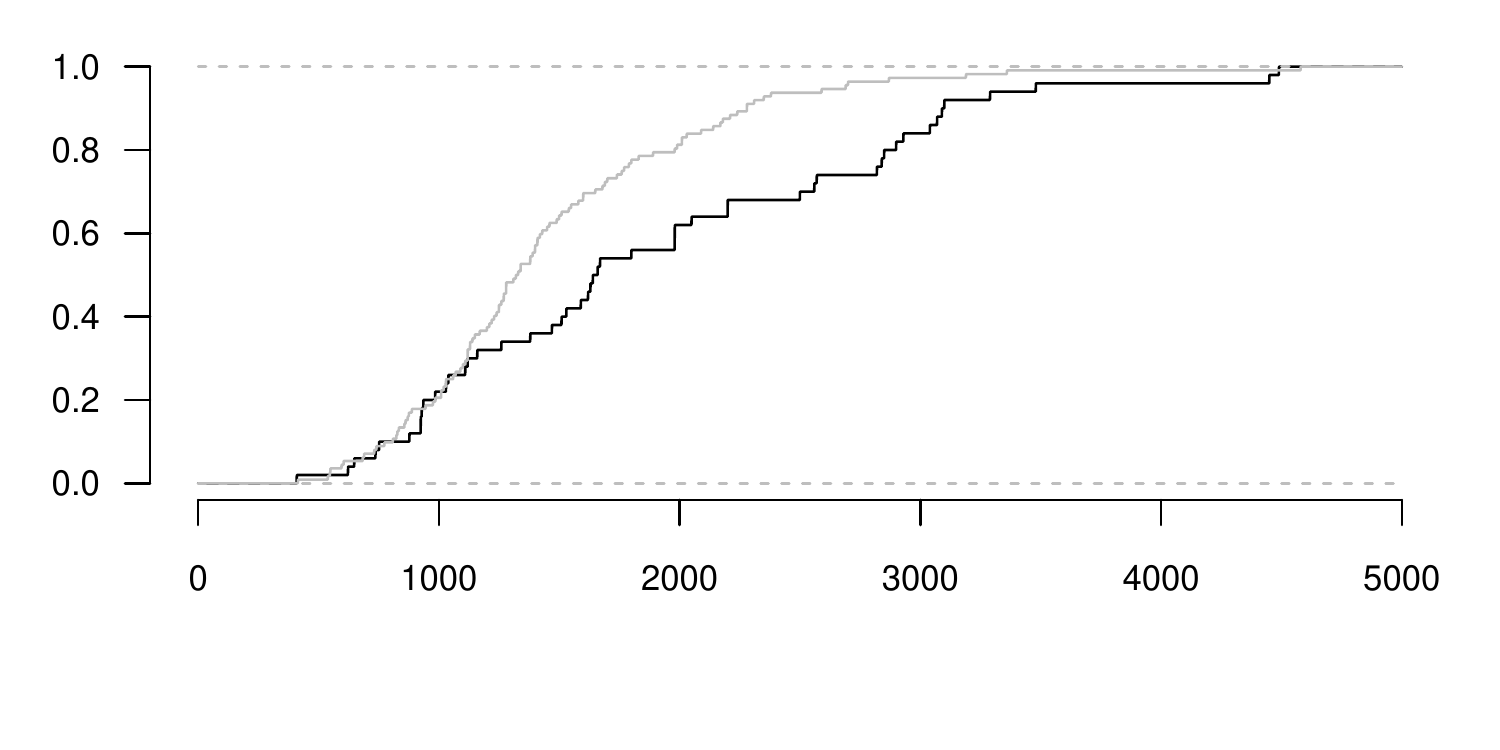}
\caption{Empirical distribution functions of the first $50$ observations (black line) and the last $113$ observations (grey line).}
\label{Abb4}
\end{figure}

In the statistical analysis of floods annual maxima are typically modeled as independent. However, such time series often display some correlation in truth. Classical methods of extreme value theory sometimes fail if observations are dependent, a problem that is bypassed by our method. Moreover, the data seem to have heavy tails. But Corollary \ref{LimitTheoremDistributionalChange} does not require any moment conditions and hence we may apply the test for distributional change to these $\mathds R$-valued observations. For this purpose compute (\ref{UnsereTestStatistikVerteilung}) and $999$ iterations of (\ref{UnsereTestStatistikVerteilungBoots}). Figure \ref{Abb3} shows the process 
\begin{align*}
\frac{1}{n}\int \left (\sum_{i=1}^k 1_{ \{ X_i \leq x\} } - \frac{k}{n} \sum_{i=1}^n 1_{ \{ X_i \leq x\} } \right )^2 \phi(x) dx \ \ \ \ k=1, \dots , n-1
\end{align*}
where we have used the probability density of the $N(2000,2000^2)$ distribution as weight function $\phi(\cdot)$. The value of the test statistic equals the maximum of this process, which is larger than the bootstrapped level of significance and therefore a change is detected. \\
Finally Figure \ref{Abb4} compares the empirical distribution functions based on the data before and after 1900, which is where the maximum is attained. The comparison indicates that moderately severe floods have become less frequent.

\section{Simulation Study}

\subsection{CUSUM test for functional data}

\begin{table}
\centering
\caption{Empirical size for CUSUM test for the FAR(1)-model with Gaussian-/ Wiener-kernel; nominal size $\alpha=0.1$}
{\footnotesize 	\begin{tabular*}{\columnwidth}{@{\extracolsep{\fill} } c c c c c c c c}
\toprule[1pt]
&  $ \lVert \psi \rVert_{L^2} $ & $0$ & $ 0.1$ & $ 0.2$ & $0.4$ & $0.6$ \\
\midrule [1pt]
& $p=4$ & $0.110/ 0.106$ & $0.118/ 0.122$ &  $0.138/ 0.154$ & $0.171/ 0.189$ & $0.274/ 0.263$  \\
& $p=5$ &$0.098/ 0.117$ & $0.106/ 0.124$ & $0.131/ 0.099$ & $0.159/ 0.149$ & $0.214/ 0.235$ \\
$n=50$	& $p=6$ & $0.138/ 0.111$	 & $0.120/ 0.133$ & $0.143/ 0.120$ & $0.151/ 0.168$ & $0.193/ 0.216$  \\
& $p=7$ & $0.121/ 0.110$ & $0.117/ 0.110$ & $0.149/ 0.116$ & $0.153/ 0.142$ & $0.209/ 0.203$ \\
& $p=8$& $0.104/ 0.111$ & $0.139/ 0.125$ & $0.157/ 0.144$ & $0.164/ 0.146$ & $0.195/ 0.211$ \\
\midrule[0.5pt] 
& $p=6$& $0.107/ 0.107$& $0.102/ 0.123$& $0.139/ 0.131$& $0.157/ 0.177$& $0.236/ 0.222$\\
& $p=8$& $0.121/ 0.097$& $0.118/ 0.119$& $0.117/ 0.133$& $0.162/ 0.136$& $0.199/ 0.187$\\
$n=100$ & $p=10$ & $0.105/ 0.111$& $0.107/ 0.113$& $0.114/ 0.116$& $0.123/ 0.128$& $0.151/ 0.170$ \\
& $p=11$ & $0.112/ 0.089 $& $0.128/ 0.114$& $0.104/ 0.131$& $0.135/ 0.120$& $0.170/ 0.169$\\
& $p=13$	& $0.123/ 0.132$& $0.138/ 0.132$& $0.146/ 0.133$& $0.178/ 0.183$& $0.204/ 0.201$ \\
\bottomrule[1pt] 
\end{tabular*}
}
\label{Tab01}
\end{table}

In this simulation study we will apply our CUSUM test to realizations of functional time series, given by
\begin{align*}
Y_i (t) = \begin{cases}
     X_i(t) & \text{for } i  \leq k \\
     X_i(t)  + \mu(t) & \text{for } i > k,
     \end{cases}
\end{align*}
for $t \in [0,1]$. The function $\mu \colon [0,1] \to \mathds R$ describes the change, $k$ is the time of the change and $(X_i(t))_{i \geq 1}$ is a functional, weakly dependent sequence. As model for this sequence we will use functional autoregressive processes of order $1$ (FAR(1)), formally
\begin{align}
X_i(t) = \int_0^1\psi (t,s) X_{i-1}(s) \ ds + \epsilon_i(t), \label{RotesModellFAR}
\end{align}
see \cite{Bosq}. The $(\epsilon_i(t))_{i \geq 1}$ are independent and Gaussian and $\psi(s,t)$ is a kernel function, satisfying
\begin{align*}
\lVert \psi_G \rVert_{L^2([0,1]^2)}^2 = \int_0^1 \int_0^1 \psi^2(s,t) \ ds \ dt < 1.
\end{align*}
As kernel functions we use
\begin{align*}
\psi_G(s,t) = C_1 \exp ( (s^2+t^2)/2) \ \ \ \ \text{or}  \ \ \ \ \psi_W(s,t) = C_2 \min(s,t),
\end{align*}
the so-called Gaussian- or Wiener kernels, respectively. One obtains 
\begin{align*}
\lVert \psi_G \rVert_{L^2([0,1]^2)} \approx  C_1 (0.6832)^{-1} \ \ \ \ \text{and} \ \ \ \ \lVert \psi_W \rVert_{L^2([0,1]^2)} = C_2 6^{-1/2}.
\end{align*}

\begin{table}
\centering
\caption{Empirical power for CUSUM test for the FAR(1)-model with Gaussian-/ Wiener-kernel; $\mu(t) = \sin (t)$, change after $50\%$ of the observations; nominal size $\alpha=0.1$}
{\footnotesize 	\begin{tabular*}{\columnwidth}{@{\extracolsep{\fill} } c c c c c c c c}
\toprule[1pt]
&  $ \lVert \psi \rVert_{L^2} $ & $0$ & $ 0.1$ & $ 0.2$ & $0.4$ & $0.6$ \\
\midrule [1pt]
& $p=4$ & $1.000/ 1.000$ & $1.000/ 1.000$ & $0.997/ 1.000$ & $0.988/ 0.986$ & $0.927	/ 0.894$ \\
& $p=5$ & $1.000/ 1.000$ & $0.999/ 0.999$ & $0.998/ 0.999$ & $0.975/ 0.972$ & $0.861	/ 0.850$ \\
$n=50$ & 	$p=6$ & $1.000/ 1.000$ & $1.000/ 0.999$ & $0.996/ 0.998$ & $0.968/ 0.971$ & $0.859/ 0.830$ \\
& $p=7$ & $1.000/ 1.000$ & $1.000/ 1.000$ & $0.996/ 0.999$ & $0.969/ 0.957$ & $0.858/ 0.828$ \\
& $p=8$ & $0.998/ 0.998$ & $0.998/ 0.998$ & $0.989/ 0.990$ & $0.961/ 0.949$ & $0.808/ 0.808$ \\
\bottomrule[1pt] 
\end{tabular*}
}
\label{Tab02}
\end{table}

Note that the $L^2$-norms of the kernel functions cause the strength of the dependence in the sequences.  \\
In the simulation study we have reproduced the implementation mode of \cite{Tor2}, using the R-package \texttt{fda}. The $\epsilon_i(t)$ are created from Brownian bridges, which are then transformed to functional data objects by the R-function \texttt{Data2fd}, using $25$ B-spline functions. We set $X_{-99}(t) = \epsilon_{-99}(t)$ and $X_i$ as in (\ref{RotesModellFAR}) for $i \geq -98$. Using a burn-in period of length $100$ we discard $X_{-99}, \dots X_0$. Afterwards a function $\mu(t)$ is eventually added to $X_k(t), \dots, X_n(t)$, describing the structural change. Finally the CUSUM test is applied to these sequences, where critical values are obtained from $J=499$ bootstrap iteration. Moreover empirical size and empirical power are deduced from $1000$ simulation runs. \\
Table \ref{Tab01} shows the empirical size (empirical probabilities that the hypothesis is rejected) of the test. For almost all combinations of dependencies and block lengths it is higher then the nominal size. However, as long as the dependence is not to strong ($ \lVert \psi \rVert \leq 0.2$ for $n=50$, $\lVert \psi \rVert \leq 0.4$ for $n=100$) this happens to an acceptable degree. For $\lVert \psi \rVert =0.6$ the probability of a type I error becomes to high. This is hardly surprising, as bootstrapped test suffer from this issue even if the observations are real valued, see table \ref{Tab1} below. Finally one might compare the outcome of the test for the different FAR(1)-models. The test performs better if functional observations are generated using the Gaussian kernel, but only for the right choice of block length. \\
Table \ref{Tab02} shows the empirical power of the test. We consider the same alternative as \cite{Tor2}, that is $\mu(t) = \sin (t)$ and $k=\lfloor n/2 \rfloor$. The power is very good and decreases only slightly as the dependence grows. \\
An alternative to our test is the method of \cite{BeGaHoKo}, using functional principal components. The finite dimensional behavior of this test for dependent data was investigated in \cite{Tor2}, using the FAR(1)-model. In his simulation study the empirical size is clearly beneath the nominal size. Depending on the choice of projection dimension and the selection of the bandwidth for variance estimation, the empirical power might vanish. In contrast our test has good power properties for all block lengths.

\subsection{Cram\'{e}r-von Mises change-point test}

\begin{table}
\centering
\caption{Empirical size for Cram\'{e}r-von Mises/CUSUM test; nominal size $\alpha=0.05$}
{\footnotesize 	\begin{tabular*}{\columnwidth}{@{\extracolsep{\fill} } c c c c c }
\toprule[1pt]
 &   & $a_1= 0.2$ & $a_1=0.5$ & $a_1=0.8$ \\
\midrule [1pt]
& $p=4$ & $0.05/ 0.064$ & $0.127/ 0.14$ & $0.23/ 0.318$ \\
$n=50$ & $p=5$ & $0.044/ 0.063$& $0.085/ 0.097$ & $0.212/ 0.226$ \\
& $p=7$ & $0.046/ 0.051$ & $0.076/ 0.075$ & $0.155/0.145$ \\
\cmidrule[0.5pt] {2-5}
& $p=6$ & $0.035/ 0.078$ & $0.082/0.111$ &  $0.254/0.225$\\
$n=100$ & $p=8$ & $0.056/0.062$ & $0.059/0.079$ & $0.171/0.173$ \\
 & $p=10$ & $0.047/0.048$ & $0.072/0.054$ & $0.131/0.123$ \\
  & $p=12$ &  0.056/0.064&  0.074/0.085& $0.122/0.126$ \\
\cmidrule[0.5pt] {2-5}
& $p=8$ & $0.061/0.078$ & $0.091/0.061$ & $0.201/0.208$ \\
$n =200$  & $p=10$ & $0.04/0.061$ & $0.064/0.085$ & $0.149/0.156$ \\
 & $p=12$ & $0.055/0.061$ & $0.067/0.057$ & $0.137/0.0140$ \\
 & $p=15$ & $0.042/0.057$ & $0.066/0.063$ & $0.1/0.104$ \\
\bottomrule[1pt] 
\end{tabular*}
}
\label{Tab1}
\end{table}

\begin{table}
\centering
\caption{Empirical power for Cram\'{e}r-von Mises/CUSUM test; change of height $\mu$ after $50\%$ of the observations; nominal size $\alpha=0.05$}
{\footnotesize
\begin{tabular*} {\columnwidth}{@{\extracolsep{\fill} } c c c c }
\toprule[1pt]
\multicolumn{4}{c}{$\mu =0.5$} \\
\midrule [1pt]
 &  $a_1=0.2$ & $a_1=0.5$ & $a_1=0.8$ \\
\midrule [1pt]
 & $p=4$ & $p=7$ & $p=7$  \\
$n=50$ & $0.23/0.233$ & $0.161/0.156$& $0.207/0.194$  \\
 & $p=10$ & $p=8$ & $p=12$ \\
$n=100$ & $0.302/0.315$ & $0.28/0.262$ & $0.206/0.206$  \\
 & $p=12$ & $p=12$ & $p=15$  \\
$n =200$  & $0.669/0.7$ & $0.456/0.462$ & $0.258/0.295$  \\
\midrule [1pt]
\multicolumn{4}{c}{$\mu =1$} \\
\midrule [1pt]
 & $p=4$ & $p=7$ & $p=7$  \\
$n=50$ & $0.686/0.678$ & $0.313/0.431$& $0.351/0.335$  \\
 & $p=10$ & $p=8$ & $p=12$ \\
$n=100$ & $0.847/0.851$ &  $0.695/0.708$ & $0.419/0.373$  \\
 & $p=12$ & $p=12$ & $p=15$  \\
$n =200$  & $0.995/0.998$ & $0.937/0.945$ & $0.64/0.630$  \\
\bottomrule[1pt] 
\end{tabular*}
}
\label{Tab2}
\end{table}

In a second simulation study we investigate the finite sample performance of the Cram\'{e}r-von Mises-type change-point test, which compares the empirical distribution functions. We are considering different block lengths $p$ and three kinds of dependencies. The data generating process is an AR$(1)$-process satisfying
\begin{align*}
X_i = a_1 X_{i-1} + \epsilon_i,
\end{align*}
with $a_1 \in \{ 0.2,0.5,0.8\}$ and $ (\epsilon_i)_{i \geq 1}$ iid with $\epsilon_i \sim N(0,1-a_1^2)$. In all situations we have calculated critical values from $J=999$ bootstrap-iterations and empirical size and power from $m=1000$ iterations of the test. In addition we have applied the classical CUSUM test to the data, which compares sample means. For the execution of this test see subsection \ref{ChangeInMeanSection} and consider the special case $H= \mathds R$. The number of bootstrap-iterations is set to $999$, too. \\
Table \ref{Tab1} reports empirical sizes under the hypothesis of no change. For the low correlation case ($a_1=0.2$) the performance is quite good, even for small sample sizes like $n=50$. When $a_1=0.8$ the empirical size is drastically larger than the nominal one. This is typical for bootstrap tests due to an underestimation of covariances, see for example \cite{DoLaLeNe}. Altogether there are only marginal differences between Cram\'{e}r-von Mises and CUSUM test. Note that for the different tests, different choices of block length are advantageous. \\
Regarding the power of our test we choose for each sample size and AR-coefficient the block length that provides the best empirical size under this circumstances. We start with the following change-in-mean model:
\begin{align*}
Y_i = \begin{cases}
     X_i & \text{for } i  \leq k \\
     X_i + \mu & \text{for } i > k. 
     \end{cases}
\end{align*}
Table \ref{Tab2} gives an overview of the empirical power under this alternative for $\mu =0.5$ and $\mu =1$, respectively. We see that a level shift of height $\mu=0.5$ in an AR-process with $a_1=0.8$ is to small to be detected. However for larger shifts ($\mu =1$) the power of our test is notably good.\\
The CUSUM test is designed to detect changes in the mean. If critical values can be deduced from a known asymptotic distribution, the CUSUM test is supposed to have greater power then our test. However if critical values are investigated by the bootstrap, table \ref{Tab2} indicates that both tests have similar power properties.\\
To illustrate the power of our test against several alternatives, consider a change in the skewness of a process. Therefore we need a second data generating process $X_i'=a_1X_{i-1}'+\epsilon_i'$, independent of the first one, and define
\begin{align*}
Y_i = \begin{cases}
     X_i^2 + X_i'^2 & \text{for } i  \leq k \\
     4- (X_i^2+X_i'^2) & \text{for } i > k. 
     \end{cases}
\end{align*}
Table \ref{Tab4} shows that against this alternative the power of the Cram\'{e}r-von Mises test is excellent for $n=200$ and coefficients $a_1 \leq 0.5$. The same table illustrates the power of the CUSUM test. Apparently this test does not see changes in the skewness when the mean is unmodified. \\
To summarize, the Cram\'{e}r-von Mises test can be used as an omnibus test for change in the marginal distribution without prespecifying the type of a change. In the case of a change in mean, the power is not much lower compared to the classical CUSUM test. Therefore the test that is based on the Cram\'{e}r-von Mises statistic seems advantageous.

\begin{table}
\centering
\caption{Empirical power for Cram\'{e}r-von Mises/CUSUM test; change in skewness after $50\%$ of the observations; nominal size $\alpha=0.05$}
{\footnotesize
\begin{tabular*} {\columnwidth}{@{\extracolsep{\fill} } c c c c }
\toprule[1pt]
 &  $a_1=0.2$ & $a_1=0.5$ & $a_1=0.8$ \\
\midrule [1pt]
 & $p=4$ & $p=7$ & $p=7$  \\
$n=50$ & $0.323/0.047$ & $0.244/0.062$& $0.196/0.08$  \\
 & $p=10$ & $p=8$ & $p=12$ \\
$n=100$ & $0.546/0.033$ & $0.461/0.045$ & $0.223/0.076$  \\
 & $p=12$ & $p=12$ & $p=15$  \\
$n =200$  & $0.945/0.04$ & $0.846/0.045$ & $0.375/0.065$  \\
\bottomrule[1pt] 
\end{tabular*}
}
\label{Tab4}
\end{table}

\section*{Acknowledgements}
The research was supported by the DFG Sonderforschungsbereich 823 (Collaborative Research Center) Statistik Nichtlinearer Dynamischer Prozesse. \\
The authors are grateful to Svenja Fischer and Andreas Schumann from the faculty of civil engineering, Ruhr-Universit\"{a}t Bochum, for providing hydrological data.

\bibliography{Lit}

\appendix

\section{Prelimanary Results}

\begin{The}\label{StraffheitKritBillingsley}
Let $\{W_n\}_{n \geq 1}$ be a sequence of $D_H[0,1]$- valued random functions with $W_n(0) = 0$. Then $\{W_n\}_{n \geq 1}$ is tight in $D_H[0,1]$ if the following condition is satisfied:
\begin{align*}
\lim_{\delta \to 0} \limsup_{n \to \infty} \frac{1}{\delta} P \left( \sup_{s \leq t \leq s +\delta} \lVert W_n(t) \rVert > \epsilon \right) = 0,
\end{align*}
for each positive $\epsilon$ and each $s \in  [0,1]$. \\
Furthermore the weak limit of any convergent subsequence of $\{W_n\}$ is in $C_H [0,1]$, almost surely.
\end{The}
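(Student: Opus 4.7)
The plan is to adapt the classical tightness criterion of \cite{Bil} (Theorem 13.2) for real-valued cadlag processes to the Hilbert-space-valued setting, using only the separability and completeness of $H$.

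First, I would recall the Hilbert-space analogue of the compactness characterization in $D_H[0,1]$: a set $A \subset D_H[0,1]$ is relatively compact if and only if $\sup_{f \in A} \sup_{t \in [0,1]} \lVert f(t) \rVert < \infty$ and $\lim_{\delta \to 0} \sup_{f \in A} w'(f,\delta) = 0$, where $w'$ is the Skorohod cadlag modulus
\[
w'(f,\delta) = \inf_{\{t_i\}} \max_{i} \sup_{s,t \in [t_{i-1}, t_i)} \lVert f(s) - f(t) \rVert,
\]
the infimum being taken over partitions $0 = t_0 < \cdots < t_r = 1$ with $\min_i (t_i - t_{i-1}) > \delta$. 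The proof mirrors the real-valued case and only uses that $H$ is a complete separable metric space.

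Second, I would invoke the standard reduction: tightness of $\{W_n\}$ in $D_H[0,1]$ follows once I show (a) for every $\eta > 0$ there exists $a > 0$ with $P(\sup_t \lVert W_n(t) \rVert > a) \leq \eta$ for all $n$, and (b) for every $\epsilon, \eta > 0$ there exist $\delta \in (0,1)$ and $n_0$ such that $P(w'(W_n,\delta) \geq \epsilon) \leq \eta$ for $n \geq n_0$. To verify (b), I partition $[0,1]$ into $\lceil 1/\delta \rceil$ subintervals with endpoints $s_i = i\delta$ and apply the union bound so that
\[
P(w'(W_n,\delta) \geq \epsilon) \leq \sum_{i} P\Bigl( \sup_{s_i \leq t \leq s_i+\delta} \lVert W_n(t) - W_n(s_i) \rVert > \epsilon/2 \Bigr).
\]
By the hypothesis applied at each $s_i$, each summand is of order $\delta \cdot \varphi(\delta, n)$ with $\varphi(\delta,n) \to 0$ as $n \to \infty$ and $\delta \to 0$, yielding a total bound of order $(1/\delta)\cdot \delta \cdot o(1) = o(1)$. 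Condition (a) then follows from (b) combined with $W_n(0) = 0$ by telescoping over the partition.

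Third, for the continuity assertion, the jump functional $J(f) := \sup_t \lVert f(t) - f(t-) \rVert$ satisfies $J(f) \leq w'(f,\delta)$ for every $\delta > 0$, so condition (b) yields $J(W_n) \to 0$ in probability. Since $J$ is measurable and upper semicontinuous in the Skorohod topology on $D_H[0,1]$, any weak limit $W$ of a subsequence $\{W_{n_k}\}$ satisfies $J(W) = 0$ almost surely by the Portmanteau theorem, which gives $W \in C_H[0,1]$ almost surely. The main obstacle will be justifying the Hilbert-space compactness characterization and the equivalent reformulation of tightness in terms of (a) and (b): while formally parallel to the $\mathds{R}$-valued case, these arguments rely on careful manipulations of Skorohod reparametrizations and require verifying that none of the classical estimates use the order structure of $\mathds{R}$, only the metric structure of the target space.
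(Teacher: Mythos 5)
There is a genuine gap at your very first step, and it sits exactly where the classical argument fails to transfer to $H$-valued functions. Your claimed characterization of relative compactness in $D_H[0,1]$ --- uniform boundedness of $\sup_{t}\lVert f(t)\rVert$ plus $\lim_{\delta\to0}\sup_{f\in A}w'(f,\delta)=0$ --- uses the Heine--Borel property of $\mathds R$, not merely completeness and separability of the target space: the Arzel\`a--Ascoli step needs the set of values $\{f(t)\colon f\in A,\ t\in[0,1]\}$ to be relatively compact, and in an infinite-dimensional $H$ bounded sets are not relatively compact. Concretely, if $(e_n)_{n\geq1}$ is an orthonormal basis, the constant functions $f_n\equiv e_n$ satisfy both of your conditions with $w'(f_n,\delta)=0$, yet $\lVert e_n-e_m\rVert=\sqrt2$ rules out any convergent subsequence. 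The same example defeats your reduction of tightness to (a) and (b): the deterministic processes $W_n(t)=te_n$ satisfy $W_n(0)=0$, (a), (b), and the increment form of the oscillation hypothesis (the oscillation over $[s,s+\delta]$ is identically $\delta$, so the probability vanishes once $\delta\leq\epsilon$), but the point masses at $te_n$ are not tight in $D_H[0,1]$, since tightness of Dirac masses forces relative compactness of $\{te_n\}_{n \geq 1}$ in the Skorohod metric, which fails. What is missing is a compact containment (marginal tightness) ingredient: for instance, tightness in $H$ of the laws of $W_n(t)$ for each $t$ in a dense set, which together with (b) places all values, with high probability, in an $\epsilon$-neighbourhood of a compact subset of $H$, whence tightness follows by the genuinely infinite-dimensional criterion (as in Ethier and Kurtz, Ch.\ 3). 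For comparison, the paper's own ``proof'' is the one-sentence assertion that the proof of Theorem 8.3 of \cite{Bil} still holds for $H$-valued functions, so it glosses over precisely the same point; in the applications (Theorems \ref{FCLTHilbertraum} and \ref{SeqBootstrapHilbertraum}) the missing marginal tightness is in fact available because the finite-dimensional (conditional) limits are established separately before the criterion is invoked, but it has to be imported into the hypotheses or the proof --- it does not follow from the stated oscillation condition alone.

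Two smaller defects. First, your union bound needs the hypothesis uniformly over the $O(1/\delta)$ partition points $s_i=i\delta$, which move as $\delta\to0$, whereas the condition is stated pointwise in $s$ with the $\limsup_n$ taken for each fixed $s$; Billingsley's Theorem 8.3 assumes the bound uniformly in $s$, and in the paper's application the uniformity comes from the conditional stationarity of the bootstrapped block increments, which reduces everything to $s=0$ --- you should either assume the uniform version or record this reduction. (Your silent passage from $\lVert W_n(t)\rVert$ to $\lVert W_n(t)-W_n(s_i)\rVert$ is harmless, since the literal hypothesis dominates the increment probability at level $\epsilon/2$ by the triangle inequality.) Second, the inequality $J(f)\leq w'(f,\delta)$ is false: for $f=1_{[1/2,1]}$ one has $w'(f,\delta)=0$ for all $\delta<1/2$ but $J(f)=1$, because $w'$ is blind to jumps located at partition points. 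The continuity of subsequential limits should instead be run through the ordinary modulus: for $t\in(s,s+\delta]$ one has $\lVert W_n(t)-W_n(t-)\rVert\leq 2\sup_{s\leq u\leq s+\delta}\lVert W_n(u)-W_n(s)\rVert$, so the hypothesis forces $J(W_n)\to0$ in probability, and membership of the limit in $C_H[0,1]$ then follows as in the real-valued case.
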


For real valued random variables this is Theorem 8.3 of \cite{Bil}, which carries over to $D[0,1]$. The proof still holds for H-space valued functions.
\\\\
The next lemma is Lemma 4.1 of \cite{ChWh} with the slight modification that the third condition contains fourth moments instead of second moments. Let $(e_i)_{i \geq 1}$ be an orthonormal basis of $H$. Then $H_k$ is the closed linear span of $(e_i)_{1 \leq i \leq k}$ and $P_k \colon H \to H_k$ the projection operator.

\begin{Lem} \label{ConditionWeakConvHilbertspace}
Let $\{W_n\}_{n \geq 1}$ be a sequence of $D_H[0,1]$- valued random functions. Let $W^k$ be a Brownian motion in $H_k$ with  $S^k$ being the covariance operator of $W^k(1)$. Suppose the following conditions are satisfied:
\begin{enumerate}
\item [(i)] For each $k \geq 1$, $P_k W_n \Rightarrow W^k$ in $D_{H_k}[0,1]$ (as $n \to \infty$),
\item [(ii)] $W^k \Rightarrow W$ in $D_{H}[0,1]$ (as $k \to \infty$),
\item [(iii)] $\limsup_{n \to \infty} E \left( \sup_{t \in [0,1]} \lVert W_n(t) - P_kW_n(t)\rVert ^4 \right) \rightarrow 0$ as $k \to \infty$.
\end{enumerate}
Then $W_n \Rightarrow W$ in $D_H[0,1]$, where $W$ is a Brownian motion in $H$ with covariance operator $S$.
\end{Lem}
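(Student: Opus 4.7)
The plan is to invoke the standard triangular approximation principle for weak convergence on a separable metric space (Theorem~3.2 of Billingsley \cite{Bil}): if $W_n^{(k)} \Rightarrow Y^{(k)}$ as $n \to \infty$ for each fixed $k$, if $Y^{(k)} \Rightarrow Y$ as $k \to \infty$, and if
\begin{align*}
\lim_{k \to \infty} \limsup_{n \to \infty} P \bigl( d(W_n, W_n^{(k)}) > \epsilon \bigr) = 0 \quad \text{for every } \epsilon>0,
\end{align*}
then $W_n \Rightarrow Y$. I would apply this with $W_n^{(k)} := P_k W_n$, $Y^{(k)} := W^k$ and $Y := W$, working in the separable space $D_H[0,1]$ equipped with the Skorohod metric $d$ (separability was recorded earlier in this section).

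First I would promote hypothesis~(i) from weak convergence in $D_{H_k}[0,1]$ to weak convergence in $D_H[0,1]$. Since $H_k$ is a closed subspace of $H$ and the Skorohod metric is defined pointwise through the $H$-norm, the natural embedding $\iota_k \colon D_{H_k}[0,1] \hookrightarrow D_H[0,1]$ is an isometry; the continuous mapping theorem therefore upgrades $P_k W_n \Rightarrow W^k$ to the same statement inside $D_H[0,1]$.

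Next I would verify the uniform-in-$n$ closeness condition. Choosing $\lambda = id$ in the definition of the Skorohod metric gives the crude but sufficient bound
\begin{align*}
d(W_n, P_k W_n) \leq \sup_{t \in [0,1]} \lVert W_n(t) - P_k W_n(t) \rVert,
\end{align*}
and Markov's inequality then yields
\begin{align*}
P \bigl( d(W_n, P_k W_n) > \epsilon \bigr) \leq \epsilon^{-4} \, E \Bigl( \sup_{t \in [0,1]} \lVert W_n(t) - P_k W_n(t) \rVert^4 \Bigr).
\end{align*}
Condition~(iii) drives the right-hand side to zero in the order $\limsup_n$ then $\lim_k$. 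Combined with hypothesis~(ii) and the refined form of~(i), Billingsley's approximation lemma then delivers $W_n \Rightarrow W$ in $D_H[0,1]$, with $W$ an $H$-valued Brownian motion whose marginal $W(1)$ has covariance operator $S$.

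I do not anticipate a genuine obstacle: the argument mirrors that of Lemma~4.1 in \cite{ChWh}, and the strengthening from second to fourth moments in~(iii) only helps the Markov step. The sole point worth flagging is that Billingsley's approximation lemma must be applied in the general metric-space formulation; separability of $D_H[0,1]$ under the Skorohod metric, already established in the paper, is exactly what makes this legitimate.
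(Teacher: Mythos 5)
Your argument is correct, but note that it is not a reconstruction of anything the paper actually writes out: the paper gives no proof of this lemma at all, simply attributing it to Lemma 4.1 of \cite{ChWh} ``with the slight modification that the third condition contains fourth moments instead of second moments.'' That citation is legitimate because the modification only strengthens the hypothesis: by Lyapunov's inequality, $E Z^2 \leq \left( E Z^4 \right)^{1/2}$, so condition (iii) with fourth moments implies the second-moment condition of the cited lemma, which then applies verbatim. What you have supplied instead is the underlying argument itself, essentially the one Chen and White use: Billingsley's approximation theorem (beware the edition mismatch --- the paper's other references to \cite{Bil}, Theorems 8.3 and 21.1, point to the 1968 edition, where your theorem is numbered 4.2, not 3.2), the isometric embedding $D_{H_k}[0,1] \hookrightarrow D_H[0,1]$ to upgrade hypothesis (i), the bound $d(W_n, P_k W_n) \leq \sup_{t \in [0,1]} \lVert W_n(t) - P_k W_n(t) \rVert$ obtained by taking $\lambda = id$ in the Skorohod infimum, and Markov's inequality to convert (iii) into $\lim_{k} \limsup_{n} P \left( d(W_n, P_k W_n) > \epsilon \right) = 0$. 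All four steps are sound, and your flag about separability of $(D_H[0,1], d)$ is apt, since it is what makes $d(W_n, P_k W_n)$ measurable and the approximation theorem applicable. Your route buys self-containedness and makes visible that the fourth moments enter only through the Markov step --- second moments, or even convergence in probability of the uniform distance, would suffice --- whereas the paper's route is one sentence of attribution. One cosmetic point: like the lemma statement itself, you take the identification of the limit $W$ as an $H$-Brownian motion with covariance operator $S$ from hypothesis (ii) rather than deriving it; this is consistent with how the lemma is deployed in the proof of Theorem \ref{FCLTHilbertraum}, where $W$ is constructed first and (ii) is verified via $W^k =_{\mathcal D} P_k W$ together with $\sup_{t \in [0,1]} \lVert P_k W(t) - W(t) \rVert \to 0$ almost surely.
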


\begin{Lem} \label{4teMomentenungleichung}
Let $(X_n)_{n \geq 1}$ be $H$-valued, stationary and $L_1$-near epoch dependent on an absolutely regular process with mixing coefficients $(\beta(m))_{m \geq 1}$ and approximation constants $(a_m)_{m \geq 1}$. If $EX_1 =0$ and
\begin{enumerate}
\item [(i)] $E \lVert X_1 \rVert^{4 + \delta} < \infty$,
\item [(ii)] $ \sum_{m=1}^{\infty} m^2 (a_m)^{\delta / (\delta+3)} < \infty$,
\item [(iii)] $ \sum_{m=1}^{\infty} m^2(\beta(m))^{\delta / (\delta +4)} < \infty$,
\end{enumerate}
holds for some $\delta > 0$, then
\begin{align*}
E \lVert X_1 + X_2 + \dots + X_n \rVert ^4 \leq C n^2 \left( E \lVert X_1 \rVert ^{4 + \delta} \right)^{\frac{1}{1+ \delta}}.
\end{align*}
\end{Lem}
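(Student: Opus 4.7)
My plan is to adapt the Yokoyama-style fourth-moment argument for weakly dependent real sequences, expanding the Hilbert norm through its inner product so that the problem reduces to bounding products of scalar inner products. Starting from
\begin{align*}
E\lVert X_1+\cdots+X_n\rVert^4 = \sum_{i_1,i_2,i_3,i_4=1}^{n} E\bigl[\langle X_{i_1},X_{i_2}\rangle\,\langle X_{i_3},X_{i_4}\rangle\bigr],
\end{align*}
for each quadruple I would reorder the indices as $j_1\le j_2\le j_3\le j_4$ and let $g=\max(j_2-j_1,\,j_3-j_2,\,j_4-j_3)$ be the largest gap. The position of this gap partitions $\{j_1,\dots,j_4\}$ into two clusters $A$ and $B$ with $\min B-\max A=g$, and mixing will be exploited across this split.

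For each quadruple I would introduce the approximations $\tilde X_j=E[X_j\mid\mathcal F_{j-m}^{j+m}]$ with $m=\lfloor g/3\rfloor$, so that the approximated variables in $A$ and in $B$ are measurable with respect to $\sigma$-fields at distance at least $m$. The $L_1$-NED bound $E\lVert X_j-\tilde X_j\rVert\le a_m$, combined with the a priori bound $\lVert X_j-\tilde X_j\rVert_{4+\delta}\le 2\lVert X_1\rVert_{4+\delta}$ and the Lyapunov interpolation inequality, controls $\lVert X_j-\tilde X_j\rVert_r$ for any $r\in[1,4+\delta]$ by a power of $a_m$; choosing $r$ suitably the effective residual rate in a fourth-order product matches the exponent $\delta/(3+\delta)$ of hypothesis (ii). Writing $X_{j_r}=\tilde X_{j_r}+(X_{j_r}-\tilde X_{j_r})$ in each inner product, expanding and applying H\"older and Cauchy--Schwarz isolates a principal term in the approximated variables plus controllable residual errors.

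To the principal term I would apply the Doukhan covariance inequality for absolutely regular sequences
\begin{align*}
|E[fg]-E[f]E[g]|\le C\,\beta(m)^{\delta/(4+\delta)}\,\lVert f\rVert_{(4+\delta)/2}\,\lVert g\rVert_{(4+\delta)/2},
\end{align*}
taken with H\"older exponent $p=q=(4+\delta)/2$, which decouples the two clusters at cost $\beta(m)^{\delta/(4+\delta)}$. What remains is the product of within-cluster expectations, which summed over all quadruples is dominated by $(E\lVert S_n\rVert^2)^2$; the absolute convergence of the covariance series in (\ref{KovarianzOperator}) yields $E\lVert S_n\rVert^2=O(n)$, so this principal part contributes $O(n^2)$ times a moment factor. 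After combining all the interpolations, each quadruple with maximal gap $g$ contributes at most $C\,(E\lVert X_1\rVert^{4+\delta})^{1/(1+\delta)}\bigl[a_m^{\delta/(3+\delta)}+\beta(m)^{\delta/(4+\delta)}\bigr]$ to the sum beyond the principal part.

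Finally I would count: ordered quadruples in $\{1,\dots,n\}^4$ with maximal gap equal to $m$ number $O(nm^2)$, so the total error is bounded by
\begin{align*}
Cn\,(E\lVert X_1\rVert^{4+\delta})^{1/(1+\delta)}\sum_{m\ge 1}m^2\Bigl[a_m^{\delta/(3+\delta)}+\beta(m)^{\delta/(4+\delta)}\Bigr],
\end{align*}
which is finite by hypotheses (ii) and (iii), yielding together with the $O(n^2)$ principal part the stated bound $Cn^2(E\lVert X_1\rVert^{4+\delta})^{1/(1+\delta)}$. The main obstacle is the case analysis: depending on which of the three gaps is maximal and whether the original pairing $(i_1,i_2)(i_3,i_4)$ respects the $A$-$B$ split, one must either decouple the two inner products directly via Doukhan or first dominate a straddling inner product $\langle X_{j_a},X_{j_b}\rangle$ by $\lVert X_{j_a}\rVert\,\lVert X_{j_b}\rVert$ before applying mixing to scalar norms. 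Balancing the Lyapunov and H\"older exponents across these cases so that the final rates are exactly $a_m^{\delta/(3+\delta)}$ and $\beta(m)^{\delta/(4+\delta)}$ is the delicate point.
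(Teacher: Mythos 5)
Your skeleton---expanding $E\lVert S_n\rVert^4=\sum_{i_1,i_2,i_3,i_4}E[\langle X_{i_1},X_{i_2}\rangle\langle X_{i_3},X_{i_4}\rangle]$, splitting at the maximal gap $g$, approximating by $E[X_j\mid\mathcal F_{j-m}^{j+m}]$ with $m=\lfloor g/3\rfloor$, decoupling across the gap at cost $\beta(m)^{\delta/(4+\delta)}$ plus NED residuals of rate $a_m^{\delta/(3+\delta)}$, and counting $O(nm^2)$ quadruples per gap size---is exactly the proof of Lemma 2.24 of \cite{BoBuDe} that the paper invokes (the paper gives no details beyond the assertion that this proof ``is also valid for Hilbert spaces''). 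However, your treatment of the straddling pairings, which is precisely the point where the Hilbert-space case differs from the scalar one, contains a genuine gap. First, the covariance inequality you quote bounds $|E[fg]-E[f]E[g]|$ for $f$ measurable with respect to the cluster-$A$ $\sigma$-field and $g$ with respect to the cluster-$B$ one; a straddling product such as $\langle X_{j_1},X_{j_3}\rangle\langle X_{j_2},X_{j_4}\rangle$ (with $j_1,j_2\in A$, $j_3,j_4\in B$) is \emph{not} of the form $f\cdot g$, so the inequality does not apply as stated. One needs instead the coupling characterization of absolute regularity (replace the future block by an independent copy, at cost $\beta(m)$ after truncation/H\"older)---this is available here and is exactly why $\beta$-mixing rather than mere strong mixing matters, but it is a different lemma from the one you cite. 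Second, your fallback of dominating $\langle X_{j_a},X_{j_b}\rangle$ by $\lVert X_{j_a}\rVert\,\lVert X_{j_b}\rVert$ before applying mixing fails outright: the norms are nonnegative with nonvanishing means, so after decoupling the main term is of order $(E\lVert X_1\rVert^2)^2$ with \emph{no} decay in $m$, and since quadruples with maximal gap $m$ number of order $nm^2$, these main terms alone would sum to order $n^4$, not $n^2$.

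Relatedly, your claim that the surviving principal part is ``the product of within-cluster expectations, dominated by $(E\lVert S_n\rVert^2)^2$'' is true only for split-respecting pairings. For a straddling pairing with inner maximal gap, the decoupled main term is the cross quantity $\sum_{r,s}E[X_{j_1}^{(r)}X_{j_2}^{(s)}]\,E[X_{j_3}^{(r)}X_{j_4}^{(s)}]=\operatorname{tr}\bigl(C_u C_v^{\ast}\bigr)$, where $C_u=E[X_0\otimes X_u]$ and $u=j_2-j_1$, $v=j_4-j_3$ are the within-cluster gaps; this does not factor into two covariances. The repair is to keep the bilinear structure: bound $|\operatorname{tr}(C_uC_v^{\ast})|\le\min\bigl(\rho(u)\,E[\lVert X_0\rVert\lVert X_v\rVert],\ \rho(v)\,E[\lVert X_0\rVert\lVert X_u\rVert]\bigr)$ with $\rho(u)=\sup_{\lVert x\rVert,\lVert y\rVert\le1}|E[\langle x,X_0\rangle\langle y,X_u\rangle]|$, which decays at the rate of your scalar covariance inequality, so that $\sum_{u,v\le m}\min(\rho(u),\rho(v))\le 2\sum_{v\ge1}v\rho(v)<\infty$ uniformly in $m$ by (ii)--(iii), and the total closes at $O(n^2)$. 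For an \emph{outer} maximal gap no domination is needed at all: after coupling, the isolated variable enters each inner product linearly, so $EX_1=0$ annihilates the main term. With these corrections your plan does deliver the lemma, and it is the natural fleshing-out of the citation the paper offers in place of a proof.
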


The result follows from the proof of Lemma 2.24 of \cite{BoBuDe}, which is also valid for Hilbert spaces.

\begin{Lem} \label{Maximalungleichung}
Let $(X_n)_{n \geq 1}$ be a stationary sequence of $H$-valued random variables such that $EX_1 =0$, $E \lVert X_1 \rVert ^4 < \infty$ and for some $C> 0$
\begin{align*}
E \lVert X_1 + X_2 + \dots + X_n \lVert ^4 \leq C n^2 .
\end{align*}
Then
\begin{align*}
E  \max_{k \leq n} \lVert X_1 + X_2 + \dots + X_k \lVert ^4 \leq C n^2 .
\end{align*}
\end{Lem}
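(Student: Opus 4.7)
The plan is to adapt the classical dyadic chaining argument of Móricz. The Hilbert space structure plays no essential role since only the norm enters the computation; the proof would work verbatim in any Banach space. As a preliminary remark, by stationarity the hypothesis yields
\begin{align*}
E\left\lVert \sum_{i=a+1}^{a+k} X_i \right\rVert^4 \leq C k^2
\end{align*}
for every $a \geq 0$ and every block length $k \leq n$. Without loss of generality I assume $n = 2^m$; the general case follows by passing to the next power of two, which changes the final constant only by a bounded factor.

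The key ingredient is a dyadic decomposition of the partial sum. For any $k \in \{1, \dots, 2^m\}$, expand $k = \sum_{i=0}^{m-1} \epsilon_i 2^i$ in binary and process the bits from highest to lowest: the interval $(0,k]$ splits into a disjoint union of at most $m$ sub-intervals, one of length $2^i$ for each $i$ with $\epsilon_i = 1$. The crucial observation is that each such sub-interval begins at a multiple of its own length $2^i$, since all higher-order contributions are multiples of $2^{i+1}$. Setting
\begin{align*}
D_i := \max_{0 \leq j < 2^{m-i}} \left\lVert \sum_{l = j 2^i + 1}^{(j+1)2^i} X_l \right\rVert,
\end{align*}
the triangle inequality then yields $\max_{1 \leq k \leq 2^m} \lVert S_k\rVert \leq \sum_{i=0}^{m-1} D_i$, where $S_k := X_1 + \dots + X_k$.

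For each level $i$, the crude bound $D_i^4 \leq \sum_{j=0}^{2^{m-i}-1} \bigl\lVert \sum_{l=j2^i+1}^{(j+1)2^i} X_l\bigr\rVert^4$, combined with stationarity and the moment hypothesis, gives $E D_i^4 \leq 2^{m-i} \cdot C (2^i)^2 = C \cdot 2^{m+i}$. Applying Minkowski's inequality in $L^4$,
\begin{align*}
\Bigl(E \max_{k \leq 2^m} \lVert S_k\rVert^4\Bigr)^{1/4} \leq \sum_{i=0}^{m-1} (E D_i^4)^{1/4} \leq C^{1/4} \cdot 2^{m/4} \sum_{i=0}^{m-1} 2^{i/4},
\end{align*}
and the geometric sum is bounded by $2^{m/4}/(2^{1/4}-1)$. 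Raising to the fourth power produces $\tilde{C} \cdot 2^{2m} = \tilde{C}\, n^2$, which is the desired bound (the numerical factor is absorbed into the constant). No step poses a real obstacle; the only point that requires care is the alignment observation, which guarantees that one controls the maximum only over \emph{aligned} dyadic blocks rather than over all dyadic sub-blocks of a given length, keeping the counting in the moment bound for $D_i$ clean.
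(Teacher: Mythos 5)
Your proof is correct, but it is not the paper's route: the paper disposes of this lemma in one line, citing Theorem 1 of \cite{Mor} and remarking that M\'oricz's proof carries over verbatim to Hilbert spaces, whereas you prove the maximal inequality from scratch. M\'oricz's theorem is established by induction on the length of the index interval, splitting it at a point chosen through a superadditive majorant $g(a,n)$ (here $g(a,n)=C^{1/2}n$, which is also where stationarity enters in the paper's application), and it yields the explicit constant $(1-2^{(1-\alpha)/\gamma})^{-\gamma}$, in this case $(1-2^{-1/4})^{-4}$, for \emph{all} exponents $\alpha>1$ and non-stationary arrays. Your binary-expansion chaining replaces that induction by the crude union bound $E D_i^4 \leq 2^{m-i}\cdot C\,2^{2i}$ at each dyadic level, and it succeeds precisely because the exponent $2>1$ in the hypothesis $Ck^2$ makes the level-wise $L^4$ norms $C^{1/4}2^{(m+i)/4}$ geometrically summable up to the top scale; the same chaining under a linear bound $Ck$ would only reproduce the Rademacher--Menshov $\log^2$ loss, so your argument genuinely exploits the $n^2$ growth, where M\'oricz's scheme covers the whole range $\alpha>1$ uniformly and with a better constant. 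What your route buys is a fully self-contained, elementary proof that, as you note, works in any Banach space since only the triangle inequality and scalar $L^4$ Minkowski are used --- which is in effect a proof of the paper's unproved claim that M\'oricz ``carries over directly''. Three small points, none a genuine gap: your expansion $k=\sum_{i=0}^{m-1}\epsilon_i 2^i$ misses $k=2^m$, so let $i$ run up to $m$ (the extra level consists of a single block with $ED_m^4\leq C\,2^{2m}$, leaving the geometric sum intact); the reduction to $n=2^m$ and the shifted-block bound both require the hypothesis $E\lVert S_k\rVert^4\leq Ck^2$ for all $k$, not just the fixed $n$ --- this is the intended reading, and it is what Lemma \ref{4teMomentenungleichung} supplies in the application; and the conclusion cannot hold with the literal same constant $C$ as the hypothesis, so your remark that the numerical factor is absorbed is the correct interpretation, an abuse of notation the paper's own statement shares.
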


This Lemma is a special case of Theorem 1 of \cite{Mor}. The proof carries over directly to Hilbert spaces.

\section{Proofs of the main results}

\begin{proof} [Proof of Theorem \ref{FCLTHilbertraum}] 
We will prove the theorem by verifying the three conditions of Lemma A1. To show (i) we start with the special case $H= \mathds R$. Let $EX_1 =0$. Then by Lemma 2.23 of \cite{BoBuDe} we have
\begin{align}
 \nonumber \frac{1}{n} E \left ( \sum_{i=1}^n X_i \right)^2 \rightarrow \sigma ^2,
\end{align}
where $\sigma^2 = \sum_{i = - \infty}^{\infty} E(X_0X_i)$ and this series converges absolutely. \\
Furthermore by Lemma 2.4. of \cite{DeShWe} we have
\begin{align}
\frac{1}{\sqrt n} \sum_{i=1}^n (X_i- \mu) \Rightarrow N (0, \sigma ^2). \label{ReellConvergencePartialSum}
\end{align}
In order to show convergence of the finite dimensional distributions of $W_n(t)=n^{-1/2}\sum_{i=1}^{\lfloor nt \rfloor} X_i$, we will show
\begin{align}
\left (W_n(t),W_n(1)-W_n(t) \right ) \Rightarrow \left (\sigma W(t), \sigma(W(1)-W(t)) \right ), \label{fidiReell}
\end{align}
where $W$ is standard Brownian motion in $\mathds R$. This can be easily adopted to higher dimensions than $2$. Remember that $(X_i)_{i\geq 1}$ is $L_1$-near epoch dependent on an absolutely regular process $(\epsilon_i)_{i \in \mathds Z}$ and $ \mathcal F_{-l}^m= \sigma(\epsilon_{-l}, \cdots , \epsilon_m)$. We proceed as in the proof of Theorem 21.1 in \cite{Bil}. Define
\begin{align*}
& U_{n, \lfloor n t \rfloor} = \frac 1 {\sqrt n} E \left [ \sum_{i=1}^{\lfloor nt \rfloor - 2j_n} X_i \ \vert \ \mathcal F_{- \infty}^{\lfloor nt \rfloor-j_n} \right ] \\
\text{and} \ \ \ \ & V_{n, \lfloor n t \rfloor} = \frac 1 {\sqrt n} E \left [ \sum_{i=1+\lfloor n t \rfloor + 2 j_n}^{n} X_i \ \vert \ \mathcal F^{\infty}_{\lfloor nt \rfloor+j_n} \right ],
\end{align*}
for positive integers $j_n \to \infty$. \cite{Bil} shows
\begin{align}
\lvert U_{n,\lfloor nt \rfloor} - W_n(t) \rvert \xrightarrow{\mathcal P} 0 \ \ \text{and} \ \ \lvert V_{n,\lfloor nt \rfloor} - (W_n(1) - W_n(t)) \rvert \xrightarrow{\mathcal P} 0, \label{ApproxBedErwartung}
\end{align}
and thus by (\ref{ReellConvergencePartialSum}) and Slutsky's theorem we obtain for fixed $t$ 
\begin{align}
U_{n,\lfloor nt \rfloor} \Rightarrow \sigma W(t) \ \ \text{and} \ \ V_{n, \lfloor nt \rfloor}  \Rightarrow \sigma (W(1) -W(t)). \label{EinzelneKonvergenzApprox}
\end{align}
Further for all Borel sets we get by definition of $U_{n,{\lfloor nt \rfloor}}$ and $V_{n,{\lfloor nt \rfloor}}$
\begin{align}
\nonumber \left \lvert P \left ( U_{n,\lfloor nt \rfloor} \in H_1, V_{n,\lfloor nt \rfloor} \in H_2 \right ) - P\left ( U_{n,\lfloor nt \rfloor} \in H_1\right ) P \left( V_{n,\lfloor nt \rfloor} \in H_2 \right ) \right \rvert \\
\leq  \alpha( \mathcal F_{-\infty}^{\lfloor nt \rfloor -j_n}, \mathcal F^{\infty}_{\lfloor nt \rfloor+j_n} ) \leq \alpha(j_n) \rightarrow 0, \label{StrongMixingRot}
\end{align}
as $n \to \infty$, where $\alpha()$ is the strong mixing coefficient. $\alpha(j_n)$ converges to $0$ because the $(\epsilon_i)_{i \in \mathds Z}$ are absolute regular and this implies strong mixing. For the definition of strong mixing see for example \cite{ChWh}. Combining (\ref{EinzelneKonvergenzApprox}) with (\ref{StrongMixingRot}) we arrive at
\begin{align*}
\left (U_{n,\lfloor nt \rfloor}, V_{n, \lfloor nt \rfloor} \right ) \Rightarrow \left (\sigma W(t), \sigma (W(1) -W(t)) \right ),
\end{align*}
where weak convergence takes place in $\mathds R^2$. However, because of (\ref{ApproxBedErwartung}) this implies (\ref{fidiReell}).
If we can show that the set
\begin{align}
\left \{ \max_{ s \leq t \leq s + \delta} \frac{1}{\delta} (W_n(t) - W_n(s))^2 \ \vert \ 0 \leq s \leq 1, 0 \leq \delta \leq 1, n \leq N(s, \delta) \right \} \label{GlgrIntbar}
\end{align}
is uniformly integrable, then according to Lemma 2.2 in \cite{WoWh} $W_n$ is tight in $D[0,1]$ equipped with the Skorohod topology. Furthermore the weak limit is almost surely in $C[0,1]$. \\
So fix $s \in [0,1]$ and $\delta \in [0,1]$. By the proof of Lemma 2.24 in \cite{BoBuDe} we obtain
\begin{align*} 
E \left ( \sum_{i = \lfloor n s \rfloor +1}^{\lfloor n(\delta +s) \rfloor} X_i \right )^4 \leq C (\lfloor n(\delta + s) \rfloor - \lfloor ns \rfloor)^2.
\end{align*}
Next Theorem 1 of \cite{Mor} together with the moment inequality stated above implies
\begin{align}
E \left ( \max_{ s \leq t \leq s + \delta} \left \lvert \sum_{i= \lfloor ns \rfloor +1}^{ \lfloor nt \rfloor} X_i \right \rvert \right)^4 \leq C (\lfloor n(\delta + s) \rfloor - \lfloor ns \rfloor)^2. \label{MorizcglGrIntbar}
\end{align}
Now we will show uniform integrability of (\ref{GlgrIntbar}). Using first H\"{o}lder- and Markov inequality and then (\ref{MorizcglGrIntbar}) one obtains
\begin{align*}
& E \left ( \max_{ s \leq t \leq s + \delta} \frac{1}{\delta} (W_n(t) - W_n(s))^2 1_{ \{ \max \frac{1}{\delta} (W_n(t) - W_n(s))^2 \geq K \} } \right ) \\
\leq & \ \frac{1}{K} \frac{1}{\delta^2} E \left ( \max_{ s \leq t \leq s + \delta} \lvert W_n(t) - W_n(s) \rvert \right)^4 \\
\leq & \ \frac{1}{K} \frac{1}{n^2 \delta^2} E \left ( \max_{ s \leq t \leq s + \delta} \left \lvert \sum_{i= \lfloor ns \rfloor +1}^{ \lfloor nt \rfloor} X_i \right \rvert \right)^4 \\
\leq & \ C \frac{1}{K} \frac{(\lfloor n(\delta + s) \rfloor - \lfloor ns \rfloor)^2}{n^2 \delta^2}.
\end{align*}
Because the last term tends to $0$ as $K \to \infty$, (\ref{GlgrIntbar}) is uniformly integrable and the partial sum process converges in $D[0,1]$ towards a Brownian Motion $W$ with
\begin{align*}
W(1) =_{\mathcal D} N(0,\sigma^2).
\end{align*}
Now consider an arbitrary separable Hilbert space $H$. For fixed $h \in H \setminus \{0 \}$, the sequence $( \langle X_i, h \rangle )_{ i \in \mathds N }$ is a sequence of real valued random variables. The mapping $x \mapsto \langle x, h \rangle$ is Lipschitz-continuous with constant $\lVert h \rVert$ and therefore by Lemma 2.2 of \cite{DeShWe}, $( \langle X_i, h \rangle )_{ i \in \mathds N }$ is $L_1$-near epoch dependent on an absolute regular process with approximation constants $(\lVert h \rVert a_m)_{m \in \mathds N}$ and has finite $(4 +\delta)$-moments, because
\begin{align*}
E \lvert \langle X_1, h \rangle \rvert^{4 + \delta} \leq \lVert h \rVert^{4 +\delta} E \lVert X_1 \rVert^{4 +\delta} < \infty.
\end{align*}
Thus we can apply the functional central limit theorem in $D[0,1]$ (proved in the lines above) and get
\begin{align}
\frac{1}{\sqrt n} \sum_{i=1}^{\lfloor nt \rfloor} \langle X_i,h \rangle \Rightarrow W_h(t), \label{FCLTInnerProd}
\end{align}
where $W_h$ is a Brownian motion with $E W_h(1)^2 = \sigma ^2 (h)$ and 
\begin{align*}
 \sigma ^2 (h)= \sum_{i= - \infty}^{\infty} E ( \langle X_0,h \rangle \langle X_i,h \rangle).
\end{align*}
Define the covariance operator $S  \colon H \to H$ by 
\begin{align*}
\langle S h_1, h_2 \rangle = \sum_{i=-\infty}^{\infty} E( \langle X_0,h_1 \rangle \langle X_i,h_2 \rangle).
\end{align*}
Then $\langle Sh,h \rangle = \sigma^2 (h)$ holds for all $h \in H \setminus \{0\}$. \\
Now we are able to verify condition (i) of Lemma A1. By the isometry and isomorphism between $H_k$ and $\mathds R^k$ it suffices to show for all $k \geq 1$
\begin{align}
\frac 1 {\sqrt{n}} \sum_{i=1}^{\lfloor nt \rfloor} Y_i^{(k)} \Rightarrow Y^{(k)}(t), \label{FCLTVektorRot}
\end{align}
where $Y_i^{(k)} = (\langle X_i, e_1 \rangle, \dots , \langle X_i, e_k \rangle)^t$ and $Y^{(k)}$ is Brownian motion in $\mathds R^k$, whose covariance matrix corresponds to $S^k=P_kSP_k$. By (\ref{FCLTInnerProd}) we obtain for all $k \geq 1$ and all $\lambda_1, \dots, \lambda_k$
\begin{align*}
\frac 1 {\sqrt{n}} \sum_{i=1}^{\lfloor nt \rfloor} \langle X_i, \sum_{j=1}^k \lambda_j e_j  \rangle \Rightarrow W_{\sum \lambda_j e_j}(t).
\end{align*}
But this implies (\ref{FCLTVektorRot}), because of the Cram\'{e}r-Wold device, the arguments used for verifying (\ref{fidiReell}) and the fact that univariate tightness in $D[0,1]$ implies tightness in $D_{\mathds R^k}[0,1]$.  Thus condition (i) of Lemma A1 is satisfied. For condition (ii) we need that $W^k \Rightarrow W$ as $k$ goes to $\infty$. But this holds, because
\begin{align}
\nonumber 
& W^k =_{\mathcal D} P_kW \\
\text{and} \ \ \ \ & \sup_{t \in [0,1]} \lVert P_k W - W \rVert \rightarrow 0 \ \ \text{a.s.}, \label{FastSicherKonvergenzBrBewegungRot}
\end{align}
for $k \to \infty$. (\ref{FastSicherKonvergenzBrBewegungRot}) holds pointwise due to Parseval's identity. The uniform convergence follows from the almost sure continuity of $W$.\\
Thus it remains to prove condition (iii). Define the operator $A_k \colon H \to H$ by $A_k = I - P_k$, where $I$ is the identity operator on $H$, and note that the mapping $h \mapsto A_k(h)$ is Lipschitz-continuous with Lipschitz-constant $1$. Thus $(A_k(X_i))_{i \in \mathds N}$ is a 1-approximating functional with the same constants as $(X_i)_{i \in \mathds N}$. From Lemma 2 it follows
\begin{align}
E \lVert A_k(X_1) + \dots + A_k(X_n) \rVert ^4 \leq C n^2 \left( E \lVert A_k(X_1) \rVert ^{4 + \delta} \right)^{\frac{1}{1 + \delta}}. \label{4MomentUngleichungA_k}
\end{align}
Observe that
\begin{align*}
E \left( \sup_{t \in [0,1]} \lVert W_n(t) - P_kW_n(t)\rVert ^4 \right)
= \frac{1}{n^2} E \left( \max_{1 \leq m \leq n} \left \lVert \sum_{i=1}^m A_k(X_i) \right \rVert^4 \right)
\end{align*}
and note that the term on the right hand side is bounded by $C \left( E \lVert A_k(X_1) \rVert ^{4 + \delta} \right)^{\frac{1}{1 + \delta}}$, due to (\ref{4MomentUngleichungA_k}) and Lemma A3. The constant $C$ does not depend on $k$ so it suffices to show
\begin{align}
E  \lVert A_k(X_1) \rVert ^{4 + \delta} \xrightarrow{k \to \infty} 0. \label{KovergenzA_kOperator}
\end{align}
By Parsevals's identity and the orthonormality of the $e_i$ one obtains
\begin{align*}
\lVert A_k(X_1) \rVert^2 =  \lVert \sum_{i=k+1}^{\infty} \langle X_1, e_i \rangle e_i \rVert^2 = \sum_{i=k+1}^{\infty} \langle X_1, e_i \rangle^2  \xrightarrow{k \to \infty} 0 \ \ \text{a.s.}
\end{align*}
Further  $\lVert A_k(X_1) \rVert ^{4 + \delta} \leq \lVert X_1 \rVert ^{4 + \delta} < \infty$ almost surely and thus, by dominated convergence, (\ref{KovergenzA_kOperator}) holds. But this implies condition (iii) of Lemma A1 and therefore finishes the proof.
\end{proof}

\begin{proof}[Proof of Theorem \ref{SeqBootstrapHilbertraum}]
Assume $EX_1=0$ and define
\begin{align*} 
& S_{n,i}^{\ast} := \frac{1}{\sqrt p} \sum_{j=(i-1)p +1}^{ip} (X_j^{\ast} - E^{\ast} X_j^{\ast}) \\
\text{and} \ \ & R_{n,kp}^{\ast} (t) := \frac{1}{\sqrt{kp}} \sum_{j= \lfloor kt \rfloor p +1}^{\lfloor kpt \rfloor} (X_j^{\ast} - E^{\ast} X_j^{\ast} ).
\end{align*}
Consider the following decomposition of the process $W_{n,kp}$ into the partial sum process of the independent blocks and the remainder
\begin{align*}
W_{n,kp}^{\ast} (t) = \frac{1}{\sqrt k} \sum_{i=1}^{\lfloor kt \rfloor} S_{n,i}^{\ast} + R_{n,kp}^{\ast} (t).
\end{align*}
We start by proving that $R_{n,kp}^{\ast}$ is negligible, i.e. 
\begin{align}
R_{n,kp}^{\ast}(\cdot )  \xrightarrow{P^{\ast}} 0 \ \ \text{a.s.} \label{Negligible}
\end{align}
uniformly as $n \to \infty$. Note, that $R_{n,kp}^{\ast} (t)$ is the sum over the first $l$ variables of a randomly generated block, where $l = l(k,p,t) = \lfloor kpt \rfloor - \lfloor kt \rfloor p$. Thus, for fixed $t$ we have
\begin{align*}
\lVert R_{n,kp}^{\ast} (t) \rVert \leq \frac{1}{\sqrt{kp}} \max_{1 \leq j \leq k} \left \lVert \sum_{i=j(p-1)+1}^{j(p-1)+l} (X_i- E^{\ast} X_i^{\ast} ) \right \rVert.
\end{align*}
Taking a supremum over $t$, we get
\begin{align*}
\sup_{t \in [0,1]} \lVert R_{n,kp}^{\ast} (t) \rVert \leq & \frac{1}{\sqrt{kp}} \max_{1 \leq j \leq k} \max_{1 \leq l \leq p} \left \lVert \sum_{i=j(p-1)+1}^{j(p-1)+l} (X_i- E^{\ast} X_i^{\ast} ) \right \rVert \\
=: & \ Y_{n,kp}.
\end{align*}
We will show, that $Y_{n,kp}$ converges to $0$, almost surely. \\
For $n \in \{2^{l-1} +1, \cdots , 2^l \}$ observe that
\begin{align*}
Y_n \leq & \frac{2}{\sqrt{2^l}} \max_{ j \leq k(2^l)} \max_{1 \leq m \leq p(2^l)} \left \lVert \sum_{i=j(p-1)+1}^{j(p-1)+m} (X_i- E^{\ast} X_i^{\ast} ) \right \rVert \\
=: & \ Y_l'.
\end{align*}
Taking the sum instead of the maximum, we can begin to bound the fourth moments of $Y_l'$:
\begin{align}
\nonumber E \lvert Y_l'\rvert^4 = & \  \frac{16} {2^{2l}} E \left(  \max_{ j \leq k(2^l)} \max_{m \leq p(2^l)} \left \lVert \sum_{i=j(p-1)+1}^{j(p-1)+m} (X_i- E^{\ast} X_i^{\ast} ) \right \rVert \right) ^4 \\
\nonumber \leq & \ \frac{16}{2^{2l}} \sum_{j=1} ^{k(2^l)} E \left( \max_{m \leq p(2^l)} \left \lVert \sum_{i=j(p-1)+1}^{j(p-1)+m} (X_i- E^{\ast} X_i^{\ast} ) \right \rVert \right) ^4 \\
= \nonumber & \ \frac{16k(2^l)}{2^{2l}} E \left( \max_{m \leq p(2^l)} \left \lVert \sum_{i=1}^{m} (X_i- E^{\ast} X_i^{\ast} ) \right \rVert \right) ^4.
\end{align}
The last line holds since $(X_i)_{ i \in \mathds N}$ and $E^{\ast} X_i^{\ast}$ does not depend on the block in which $X_i^{\ast}$ is, but only on the position of $X_i^{\ast}$ in this block. We want to make use of Lemma A3. For $p = p(2^l)$ and $k = k(2^l)$ we obtain using the Minkowski inequality
\begin{align*}
E \left \lVert \sum_{i=1}^{p} (X_i- E^{\ast} X_i^{\ast} ) \right \rVert  ^4
= & E \left \lVert \sum_{i=1}^{p} X_i - \frac{1}{k} \sum_{i=1}^{kp} X_i \right \rVert ^4 \\
\leq & \left \{ \left( E \left \lVert \sum_{i=1}^{p} X_i \right \rVert^4 \right)^{1/4} + \left( E \left \lVert \frac{1}{k} \sum_{i=1}^{kp} X_i \right \rVert^4 \right)^{1/4} \right \}^4 \\
= & O (p^2).
\end{align*}
In the last line we have used Lemma A2 and the fact that the first summand dominates. \\
Next by virtue of Lemma A3 we obtain 
\begin{align*}
E \left( \max_{m \leq p(2^l)} \left \lVert \sum_{i=1}^{m} (X_i- E^{\ast} X_i^{\ast} ) \right \rVert \right) ^4 = O(p^2).
\end{align*}
 Thus $E \lvert Y_l'\rvert^4 = O (\frac{p(2^l)}{2^l}) = O ((2^{- \epsilon})^l)$, because of $p(n) = O(n^{1- \epsilon})$, see the definition of the block length. Now an application of the Markov inequality and the Borel-Cantelli Lemma implies that
 \begin{align*} 
   Y_l' \xrightarrow{l \to \infty} 0 \ \ \text{a.s.}
   \end{align*}
Now $Y_n \leq Y_l'$ for  $n \in \{2^{l-1}, \cdots , 2^l \}$ and thus $Y_n$ converges almost surely to $0$ as $n$ tends to infinity. Finally this leads to
\begin{align*}
E^{\ast} (\sup_{t \in [0,1]} \lVert R_n^{\ast}(t) \rVert ) \leq E^{\ast} Y_n = Y_n \rightarrow 0 \ \ \text{a.s.}
\end{align*}
and thus we have proved (\ref{Negligible}). \\
To verify convergence of the bootstrap process in $D_H[0,1]$ it suffices to show that
\begin{align*}
V_{n,kp}^{\ast} (t) = \frac{1}{\sqrt k} \sum_{i=1}^{\lfloor kt \rfloor} S_{n,i}^{\ast}
\end{align*}
converges to the desired Gaussian process. 
\\\\
We first establish the finite dimensional convergence. For $0 \leq t_1 < \cdots < t_l \leq 1$ and $l \geq 1$ consider the increments
\begin{align*}
(V_{n,kp}^{\ast} (t_1), V_{n,kp}^{\ast} (t_2) - V_{n,kp}^{\ast} (t_1), \cdots , V_{n,kp}^{\ast} (t_l) - V_{n,kp}^{\ast} (t_{l-1})).
\end{align*}
Note that the random variables $S_{n,i}^{\ast}$ are independent, conditional on $(X_i)_{i \in \mathds{Z}}$, so it is enough to treat $V_{n,kp}^{\ast} (t_i) - V_{n,kp}^{\ast} (t_{i-1})$ for some $i \leq l$. By the consistency of the bootstrapped sample mean of $H$-valued data (see \cite{DeShWe}), there is a subset $A$ of the underlying probability space with $P(A)=1$, so that for all $ \omega \in A$ the central limit theorem holds:
\begin{align}
\frac{1}{\sqrt k} \sum_{i=1}^k S_{n,i}^{\ast} \Rightarrow N, \label{CLTHilbertGruen}
\end{align}
where $N$ is a Gaussian $H$-valued random variable with mean zero and covariance operator $S \colon H \to H$ defined by 
\begin{align*}
\langle S x, y \rangle = \sum_{i=-\infty}^{\infty} E [\langle X_0,x \rangle \langle X_i, y \rangle ], \ \ \text{for } x,y \in H.
\end{align*}
For $\omega \in A$ and arbitrary $t_i > t_{i-1}$ it follows by (\ref{CLTHilbertGruen}) that
\begin{align*}
V_{n,kp}^{\ast} (t_i) - V_{n,kp}^{\ast} (t_{i-1}) = & \ \frac{1}{\sqrt k} \sum_{i=\lfloor kt_{i-1} \rfloor +1}^{\lfloor kt_i \rfloor} S_{n,i}^{\ast} \\
= & \ \frac{ \sqrt {\lfloor kt_i \rfloor - \lfloor kt_{i-1} \rfloor}}{\sqrt k} \frac{1}{\lfloor kt_i \rfloor - \lfloor kt_{i-1} \rfloor}\sum_{i=\lfloor kt_{i-1} \rfloor +1}^{\lfloor kt_i \rfloor} S_{n,i}^{\ast} \\
\Rightarrow & \ \sqrt{t_i - t_{i-1}} N,
\end{align*}
where the distribution of $N$ is described previously. Thus the one dimensional distributions converge almost surely. But because of the conditional independence this implies the finite dimensional convergence.
\\\\
By Theorem A1, tightness will follow if we can show that
\begin{align}
\lim_{\delta \to 0} \limsup_{n \to \infty} \frac{1}{\delta} P^{\ast} \left ( \sup_{0 \leq t \leq \delta} \lVert V_{n,kp}^{\ast}(t) \rVert > \epsilon \right ) = 0 \ \ \text{a.s.} \label{Straffheit}
\end{align}
for all $\epsilon > 0$. \\
Using first Chebychev's inequality and then Rosenthal's inequality (see \cite{Ros} and \cite{LeTa} for validity in Hilbert spaces) we obtain
\begin{align*}
& \frac 1 {\delta} P^{\ast} \left ( \sup_{0 \leq t \leq \delta} \frac{1}{\sqrt k} \left \lVert \sum_{i=1}^{\lfloor kt \rfloor} S_{n,i}^{\ast} \right \rVert > \epsilon \right) \\
\leq & \ \frac{1}{\delta} \frac{1}{k^2 \epsilon ^4} E^{\ast} \left( \max_{1 \leq j \leq \lfloor k \delta \rfloor} \left \lVert \sum_{i=1}^j S_{n,i}^{\ast} \right \rVert^4 \right) \\
\leq & \ \frac{1}{\delta} \frac{1}{k^2 \epsilon ^4} C \left \{ \lfloor k \delta \rfloor E^{\ast} \lVert S_{n,1}^{\ast} \rVert^4 + \left ( \lfloor k \delta \rfloor E^{\ast} \lVert S_{n,1}^{\ast} \rVert^2 \right )^2 \right \} \\
\leq & \ C \frac{1}{\delta} \frac{k \delta}{k^2 \epsilon ^4} E^{\ast} \lVert S_{n,1}^{\ast} \rVert^4 + C \frac{1}{\delta} \frac{k^2 \delta^2}{k^2 \epsilon ^4} (E^{\ast} \lVert S_{n,1}^{\ast} \rVert^2 )^2 \\
= & \ I_n + II_n,
\end{align*}
where $I_n$ and $II_n$ are the respective summands. By the construction of the bootstrap sample and the the Minkowski inequality we get
\begin{align*}
I_n = & \frac{1}{k \epsilon ^4} \frac{1}{k} \sum_{i=1}^k \left( \frac{1}{\sqrt p} \left \lVert \sum_{j \in B_i} (X_j - \bar{X}_{n,kp})\right \rVert \right)^4 \\
= & \ C \frac 1{\epsilon^4} \frac 1 {k^2} \sum_{i=1}^k \left ( \frac1 {\sqrt{p}} \lVert \sum_{j \in B_i} X_j \rVert \right )^4 + C \frac 1 {\epsilon^4} \frac {p^2} k \lVert \bar X_{n,kp} \rVert^4 \\
& \ = \tilde I_{n,1} + \tilde I_{n,2}.
\end{align*}
By a strong Law of Large numbers (see Lemma 2.7 in \cite{DeShWe} ) we have
\begin{align*}
\frac {p^{1/2}} {k^{1/4}} \bar X_{n,kp} = \frac 1 {(kp)^{1/2} k^{1/2+1/4}} \sum_{i=1}^{kp} X_i \rightarrow 0 \ \ \text{a.s.},
\end{align*}
as $n \to \infty$. Hence $\tilde I_{n,2}$ converges almost surely to $0$. Regarding $\tilde I_{n,1}$, note that for $n \in \{2^{l-1}, \dots, 2^l\}$
\begin{align*}
 \tilde I_{n,1} \leq 16 C \frac 1 {\epsilon^4} \frac 1 {k(2^l)^2} \sum_{i=1}^{k(2^l)} \left ( \frac 1 {\sqrt{p(2^l)}} \lVert \sum_{j \in B_i} X_j \rVert \right )^4 :=  I'_{l,1}.
 \end{align*}
 We get by a fourth moment bound (see Lemma A2) 
 \begin{align*}
 E (I'_{l,1}) = O \left (1/k(2^l) \right ) = O \left (2^{- l \epsilon} \right ),
 \end{align*}
 because $k = \lfloor n/p(n) \rfloor$ and $p(n) = O(n^{1-\epsilon})$. Hence, by Markov's inequality and the Borel-Cantelli Lemma $I'_{l,1} \rightarrow 0$ almost surely for $l\to \infty$. Consequently $\tilde I_{n,1} \rightarrow 0$ almost surely for $n \to \infty$ and thus $I_1 \rightarrow 0$. \\
In \cite{DeShWe} it is shown that $E^{\ast} \lVert S_{n,i}^{\ast} \rVert^2 $ converges almost surely to $E \lVert N \rVert^2 $, where $N$ is Gaussian with the covariance operator defined above. Therefore $E \lVert N \rVert^2 $ is almost surely bounded and we obtain
\begin{align*}
II_n = \frac{\delta}{\epsilon^4} (E^{\ast} \lVert S_{n,1}^{\ast} \rVert^2 )^2 \xrightarrow{n \to \infty} \frac{\delta}{\epsilon^4} (E\lVert N \rVert^2)^2 \xrightarrow{\delta \to \infty} 0 \ \ \text{a.s.} 
\end{align*}
which implies (\ref{Straffheit}) and therefore finishes the proof.
\end{proof}

\begin{proof} [Proof of Corollary \ref{LimitTheoremCusumHilbertraumAlternative}]
Part (i) can be obtained by arguments similar to the case of real-valued random variables, see Theorem 2.1 in \cite{DeRoTa2}. 
\\\\
To verify part (ii) define random variables $U_1, \dots , U_k$, where $U_i$ is the number of the $i$th drawn block. Clearly the $U_i$ are all independent and uniformly distributed on $\{1, \dots , k\}$. \\
Note that the random variables in the blocks $B_1, \dots, B_{\lfloor k\tau \rfloor}$ are of the form $X_i$ and the variables of the blocks $B_{\lfloor k\tau \rfloor +2}, \dots, B_k$ are of the form $X_i +\Delta_n$. The change point occurs in the block $B_{\lfloor k\tau \rfloor+1}$, so this block contains shifted and non-shifted variables. \\
This subdivision in different types of blocks leads to the following decomposition of the process
\begin{align*}
\nonumber  \frac{1}{\sqrt {kp}} \left ( \sum_{i=1}^{\lfloor kpt \rfloor} Y_{n,i}^{\ast} - \frac{\lfloor kpt \rfloor}{kp} \sum_{i=1}^{kp} Y_{n,i}^{\ast}\right) 
 =  & \ \frac{1}{\sqrt{kp}} \left( \sum_{i=1}^{\lfloor kpt \rfloor} X_i^{\ast} - \frac{\lfloor kpt \rfloor}{kp} \sum_{i=1}^{kp} X_i^{\ast} \right) \\
 +& \ \sqrt{kp} \Delta_n R_{n,k,p}(t),
 \end{align*}
 where
 \begin{align}
 R_{n,k,p}(t) = & \  \frac{1}{kp} p  \sum_{i=1}^{\lfloor kt \rfloor} 1_{ \{ U_i > \lfloor k\tau\rfloor +1 \}} \label{BootsLocAlt1}\\
- & \  \frac{1}{kp} p  \frac{\lfloor kpt \rfloor}{kp}\sum_{i=1}^k 1_{ \{ U_i > \lfloor k\tau\rfloor +1 \}} \label{BootsLocAlt2} \\
+ & \  \frac{1}{kp} ((\lfloor kt \rfloor)+1)p - \lfloor n \tau \rfloor)  \sum_{i=1}^{\lfloor kt \rfloor} 1_{ \{ U_i = \lfloor k\tau\rfloor +1 \}} \label{BootsLocAlt3} \\
-& \ \frac{1}{kp} ((\lfloor kt \rfloor)+1)p - \lfloor n \tau \rfloor)  \frac{\lfloor kpt \rfloor}{kp} \sum_{i=1}^k 1_{ \{ U_i = \lfloor k\tau\rfloor +1 \}} \label{BootsLocAlt4} \\
+ & \ 1_{ \{ U_{ \lfloor kt\rfloor +1} > \lfloor k\tau\rfloor +1 \}} \frac{1}{kp} (\lfloor kpt \rfloor - \lfloor kt \rfloor p) \label{BootsLocAlt5}\\
+ & \ 1_{ \{ U_{ \lfloor kt\rfloor +1} = \lfloor k\tau\rfloor +1 \}} \frac{1}{kp} \max\{(\lfloor kpt \rfloor - \lfloor n \tau \rfloor p),0\} \label{BootsLocAlt6}.
\end{align}
By part (ii) of Corollary 1 and $\sqrt{kp} \Delta_n \rightarrow \Delta$ it remains to show that
\begin{align*}
P^{\ast} \left ( \sup_{t \in [0,1]} \lvert R_{n,k,p}(t) \rvert > \epsilon \right ) \rightarrow 0 \ \ \forall \epsilon > 0, \ \ \text{a.s.}
\end{align*}
as $n \to \infty$. But this holds because $R_{n,k,p}$ is independent of the $X_i$ and: 
(\ref{BootsLocAlt1}) $+$ (\ref{BootsLocAlt2}) and (\ref{BootsLocAlt3}) $+$ (\ref{BootsLocAlt4}) are each $o_P(1)$. To see this observe
\begin{align*}
\frac{1}{k} \sum_{i=1}^{\lfloor kt \rfloor} 1_{ \{ U_i > \lfloor k\tau\rfloor +1 \}} \xrightarrow{P} t(1- \tau),
\end{align*}
uniformly in $t$. \\
The quantity in (\ref{BootsLocAlt5}) is $o_P(1)$ because $(\lfloor kpt \rfloor - \lfloor kt \rfloor p)/(kp) \rightarrow 0$. Finally (\ref{BootsLocAlt6}) is $o_P(1)$ because $P(U_{ \lfloor kt\rfloor +1} = \lfloor k\tau\rfloor +1) =k^{-1}$.
 \end{proof}

\end{document}